\documentclass[10pt,a4paper]{imsart}

\RequirePackage[OT1]{fontenc}
\RequirePackage{amsthm,amsmath}
\RequirePackage[authoryear]{natbib}
\RequirePackage[colorlinks,citecolor=blue,urlcolor=blue]{hyperref}
\RequirePackage{hypernat}

\newcommand{\ignore}[1]{}{}

\newcommand{\eq}{\eqref}

\def\be#1{\begin{equation*}#1\end{equation*}}
\def\ben#1{\begin{equation}#1\end{equation}}
\def\bes#1{\begin{equation*}\begin{split}#1\end{split}\end{equation*}}
\def\besn#1{\begin{equation}\begin{split}#1\end{split}\end{equation}}

\newcommand{\beqn}{\begin{eqnarray}}             
\newcommand{\eeqn}{\end{eqnarray}}               
\newcommand{\beq}{\begin{eqnarray*}}             
\newcommand{\eeq}{\end{eqnarray*}}               

\newcommand{\nn}{\nonumber}

\usepackage{graphicx}
\usepackage{latexsym}
\usepackage{amsmath,amsthm,amssymb,amscd}
\usepackage{epsf,amsmath}


\def\E{{\mathbb{E}}}
\def\Var{{\rm Var}}
\def\cov{{\rm Cov}}

\def\P{{\mathbb{P}}}

\def \min{{\rm min}}

\startlocaldefs
\numberwithin{equation}{section}
\theoremstyle{plain}
\newtheorem{thm}{Theorem}[section]

\newtheorem{lem}[thm]{Lemma}

\newtheorem{cor}[thm]{Corollary}
\newtheorem{prop}[thm]{Proposition}
\newtheorem{rem}[thm]{Remark}
\newtheorem{exa}[thm]{Example}
\endlocaldefs

\begin{document}

\begin{frontmatter}

\title{Multivariate Normal Approximation by Stein's Method: The Concentration Inequality Approach\protect}
\runtitle{Multivariate Normal Approximation by Stein's Method}

\begin{aug}
\author{\fnms{Louis H.Y.} \snm{Chen}
\ead[label=e1]{matchyl@nus.edu.sg}}
\and
\author{\fnms{Xiao} \snm{Fang}
\ead[label=e2]{stafx@nus.edu.sg}}

\runauthor{L.H.Y. Chen AND X. Fang}

\affiliation{National University of Singapore}

\address{Department of Mathematics\\
National University of Singapore\\
10, Lower Kent Ridge Road\\
Singapore 119076\\
Republic of Singapore\\
\printead{e1}}

\address{Department of Statistics and Applied Probability\\
National University of Singapore\\
6 Science Drive 2\\
Singapore 117546\\
Republic of Singapore\\
\printead{e2}}
\end{aug}

\begin{abstract}
The concentration inequality approach for normal approximation by Stein's method is generalized to the multivariate setting. We use this approach to prove a non-smooth function distance for multivariate normal approximation for standardized sums of $k$-dimensional independent random vectors $W=\sum_{i=1}^n X_i$
with an error bound of order $k^{1/2}\gamma$ where $\gamma=\sum_{i=1}^n \E|X_i|^3$.
For sums of locally dependent (unbounded) random vectors, we obtain a fourth moment bound which is typically of order $O_k(1/\sqrt{n})$, as well as a third moment bound which is typically of order $O_k(\log n/\sqrt{n})$.
\end{abstract}

\begin{keyword}[class=AMS]
\kwd[Primary ]{60F05}
\kwd[; secondary ]{60B10}
\end{keyword}

\begin{keyword}
\kwd{Concentration inequality}
\kwd{Local dependence}
\kwd{Multivariate normal approximation}
\kwd{Stein's method}
\end{keyword}

\end{frontmatter}


\section{Introduction}

Since Stein introduced his method for normal approximation in 1972, much has been developed for normal approximation in one dimension for dependent random variables for both smooth and non-smooth functions. A typical non-smooth function is the indicator of a half line.  Three approaches have been developed to deal with non-smooth functions: the induction approach popularized by \cite{Bo84}, the recursive approach of \cite{Ra03} and the concentration inequality approach developed by \cite{Ch86, Ch98}, \cite{ChSh01, ChSh04}.

Although Stein's method has been extended to multivariate normal approximation (see, for example, \cite{Ba90}, \cite{Go91}, \cite{GoRi96}, \cite{ChMe08}, \cite{ReRo09}), relatively few results have been obtained for non-smooth functions, typically for indicators of convex sets in finite dimensional Euclidean spaces. In general, it is much harder to obtain optimal bounds for non-smooth functions than for smooth functions. As far as we know, results for non-smooth functions are those of \cite{Go91}, \cite{RiRo96} and \cite{BhHo10}, which is an exposition of G\"otze's result. While the result of \cite{RiRo96} is for bounded locally dependent random vectors, those of \cite{Go91} and of \cite{BhHo10} are for independent random vectors with finite third moments.  The approach of \cite{Go91} and of \cite{BhHo10} is by induction.

In this paper, we extend the concentration inequality approach to the multivariate setting. We prove that for $W=\sum_{i=1}^n X_i$ being a sum of independent random vectors, standardized to have $0$ mean and identity covariance matrix,
\besn{
\P(W^{(i)}\in A^{4\gamma+\epsilon}\backslash A^{4\gamma})\le 4.1 k^{1/2}\epsilon+39k^{1/2}\gamma
}
and with $|\cdot|$ denoting the Euclidean norm of a vector,
\besn{
\P(W\in A^{4\gamma+|X_i|}\backslash A^{4\gamma})\le 4.1 k^{1/2}\E |X_i|+39k^{1/2}\gamma
}
where $A$ is a convex set in $\mathbb{R}^k$, $A^\epsilon=\{x\in\mathbb{R}^k: d(x, A)\le \epsilon\}$ for $\epsilon>0$, $W^{(i)}=W-X_i$ and $\gamma=\sum_{i=1}^n\E |X_i|^3$. Using these concentration inequalities, we prove a normal approximation theorem for $W$ with an error bound of the order $k^{1/2}\gamma$. This dependence of $k^{1/2}$ on the dimension is better than $k^{5/2}$ and $k^{3/2}$ obtained by \cite{BhHo10} and $k$ as stated in \cite{Go91}.
Comparing our result with those assuming finite third moments and using other methods in the literature, only the result of \cite{Be05} gives a bound depending on $k^{1/4}$, which is better than $k^{1/2}$. Other results for i.i.d. random vectors, for example, by \cite{Na76},  \cite{Se80} and \cite{Sa81} depend on $k$. 

Our concentration inequality approach provides a new way of dealing with dependent random vectors, for example, those under local dependence, for which the induction approach is not likely to be applicable. In Section 4, we prove two multivariate normal approximation theorems for sums of locally dependent random vectors assuming finite fourth and third moments and giving error bounds typically of order $O_k(1/\sqrt{n})$ and $O_k(\log n/\sqrt{n})$ respectively. We apply them to problems with graph dependence structure and the joint distribution of sums of partial products in a sequence of independent and identically distributed random variables. 

The paper is organized as follows. In section 2, we develop techniques for the concentration inequality approach in the multivariate setting. In sections 3 and 4, we use the concentration inequality approach to obtain multivariate normal approximation theorems for sums of independent and locally dependent random vectors. In Section 5, we prove the results for local dependence. In section 6, we prove the technical lemmas in Section 2.

Throughout the paper, let $|\cdot|$ denote the Euclidean norm of a vector or the cardinality of a set, and let $||\cdot||$ denote the operator norm of a matrix. 
For a real-valued function $f$ on $\mathbb{R}^k$, we will write $\partial_j f(x)$ for $\partial f(x)/\partial x_j$, $\partial_{j j_1} f(x)$ for $\partial^2 f(x)/(\partial x_{j} \partial x_{j_1})$ and so on.
Let $a\cdot b$ denote the inner product of two vectors.
For convenience, sometimes we will use $\E^X Y$ for $\E(Y|X)$ and use $\P^X(A)$ for $\P(A|X)$.
For a positive integer $k$, let $[k]=\{1,2,\ldots,k\}$. Finally, let $I_{k\times k}$ denote the $k$ by $k$ identity matrix.

\section{Concentration inequalities}

As a powerful tool of proving distributional approximations along with error bounds, the theory of Stein's method has been extensively developed in the literature for random variables with all kinds of dependence structure. While it works well for smooth function distances, it requires much more efforts to obtain optimal bounds for non-smooth function distances such as the Kolmogorov distance. To overcome this difficulty, we consider the probability for some random variable $W$ taking values in a small interval $[a, b]$. A bound on ${\rm P}(W\in [a, b])$ is called a concentration inequality. Now if $W$ is a $k$-dimensional random vector and $Z$ is a $k$-dimensional standard Gaussian random vector, the non-smooth function distance between $\mathcal{L}(W)$ and $\mathcal{L}(Z)$ usually means $\sup_{A\in \mathcal{A}} |{\rm P}(W\in A)-{\rm P}(Z\in A)|$ where $\mathcal{A}$ denotes the set of all convex sets in $\mathbb{R}^k$. A concentration inequality in this setting would be a bound on ${\rm P}(W\in A^\epsilon\backslash A)$ where $A^\epsilon=\{x\in \mathbb{R}^k: d(x, A)\le \epsilon\}$  where $d(x, A)=\inf_{y\in A} |x-y|$.

For a given convex set $A\subset \mathbb{R}^k$, $\epsilon>0$, we define $f=f(A,\epsilon)=(f_1, f_2, \ldots, f_k)^t: \mathbb{R}^k\rightarrow \mathbb{R}^k$ as follows. For $x\in \bar{A}$ where $\bar{A}$ is the closure of $A$, $f(x)=0$. For $x\in A^{\epsilon}\backslash \bar{A}$, find $x_0$ the nearist point in $\bar{A}$ from $x$, and define $f(x)=x-x_0$. For $x\in \mathbb{R}^k\backslash A^{\epsilon}$, find $x_0$ the nearist point in $\bar{A}$ from $x$, and $x_1$ the intersection of $\{x_0+t(x-x_0): t\in [0,1]\}$ and $\partial A^{\epsilon}$, the boundary of $A^\epsilon$, and define $f(x)=x_1-x_0=f(x_1)$. We have the following four lemmas regarding the properties of the above defined $f$.

\begin{lem}\label{lem1}
We have
\besn{\label{lem1-1}
|f|\le \epsilon.
}
\end{lem}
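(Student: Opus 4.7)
The plan is to split into the three regions used in the definition of $f$ and check the bound in each.

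The first two cases are immediate. For $x\in\bar A$ the function is zero, so $|f(x)|=0\le\epsilon$. For $x\in A^\epsilon\setminus\bar A$, $f(x)=x-x_0$ where $x_0$ is the (unique, by convexity of $\bar A$) nearest point in $\bar A$ to $x$, so $|f(x)|=|x-x_0|=d(x,\bar A)\le \epsilon$ by the very definition of $A^\epsilon$.

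The only case needing a small argument is $x\in\mathbb{R}^k\setminus A^\epsilon$. Here $f(x)=x_1-x_0$ with $x_1$ on the segment from $x_0$ to $x$ lying on $\partial A^\epsilon$. The key observation I would use is that, because $\bar A$ is closed and convex, the metric projection onto $\bar A$ is well-defined and has the property that for every $t\in[0,1]$ the nearest point in $\bar A$ to $x_0+t(x-x_0)$ is still $x_0$. In particular $x_0$ is the foot of the perpendicular from $x_1$ to $\bar A$, so $d(x_1,\bar A)=|x_1-x_0|$. Since $x_1\in\partial A^\epsilon$, this distance equals exactly $\epsilon$, giving $|f(x)|=\epsilon$.

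Combining the three cases yields $|f|\le\epsilon$ on all of $\mathbb{R}^k$. The one place to be slightly careful is the claim that $x_0$ remains the closest point of $\bar A$ to every point on the segment joining $x_0$ to $x$; I would justify this by invoking the standard obtuse-angle characterization $\langle y-x_0,x-x_0\rangle\le 0$ for all $y\in\bar A$, which is preserved when $x$ is replaced by $x_0+t(x-x_0)$. No other obstacles are expected, as the rest is just unpacking the piecewise definition of $f$.
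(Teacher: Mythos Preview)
Your proposal is correct and follows essentially the same approach as the paper: the paper's one-line proof simply states that for $x\in\mathbb{R}^k\setminus A^\epsilon$ the point $x_0$ must be the nearest point in $\bar A$ to $x_1$, which is precisely your key observation in the third case. You supply more detail than the paper does (the explicit three-case split and the justification via the obtuse-angle characterization of the metric projection), but the argument is the same.
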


\begin{lem}\label{lem3}
For all $\xi,\eta \in \mathbb{R}^k$,
\besn{\label{lem1-2}
\xi \cdot (f(\eta+\xi)-f(\eta)) \ge 0.
}
\end{lem}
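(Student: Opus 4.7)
My plan is to exhibit $f$ as the gradient of a convex real-valued function $F$ on $\mathbb{R}^k$; once this is done, \eqref{lem1-2} is simply the monotonicity of the gradient of a convex function.

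Define $\Phi:[0,\infty)\to[0,\infty)$ by $\Phi(t)=\tfrac{1}{2}t^2$ for $0\le t\le\epsilon$ and $\Phi(t)=\epsilon t-\tfrac{1}{2}\epsilon^2$ for $t>\epsilon$. Then $\Phi$ is $C^1$, convex and nondecreasing, with $\Phi'(t)=\min(t,\epsilon)$. Set $F(x)=\Phi(d(x,A))$. Since $A$ is convex, $d(\cdot,A)$ is a nonnegative convex function on $\mathbb{R}^k$, and $F$, being the composition of a convex function with a nondecreasing convex function, is itself convex.

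Next I verify $\nabla F=f$ pointwise. Writing $P_A(x)$ for the projection of $x$ onto $\bar A$, for $x\notin\bar A$ the distance $d(\cdot,A)$ is differentiable with $\nabla d(x,A)=(x-P_A(x))/|x-P_A(x)|$, so the chain rule gives $\nabla F(x)=\min(d(x,A),\epsilon)\cdot(x-P_A(x))/|x-P_A(x)|$. Comparing with the construction of $f$, this evaluates to $x-P_A(x)$ when $x\in A^\epsilon\setminus\bar A$ (the regime $d(x,A)\le\epsilon$) and to $\epsilon(x-P_A(x))/|x-P_A(x)|=x_1-x_0$ when $x\notin A^\epsilon$, matching $f$ in both cases. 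For $x\in\bar A$, $\Phi'(0)=0$ together with $F(x)=\tfrac{1}{2}d(x,A)^2=O(\mathrm{dist}(x,\partial A)^2)$ near $\partial A$ forces $\nabla F(x)=0=f(x)$.

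The inequality $\xi\cdot(f(\eta+\xi)-f(\eta))\ge 0$ then follows from the standard fact that the gradient of a convex function is a monotone operator: adding the supporting-hyperplane inequalities $F(\eta+\xi)\ge F(\eta)+f(\eta)\cdot\xi$ and $F(\eta)\ge F(\eta+\xi)-f(\eta+\xi)\cdot\xi$ yields the claim. The main technical care is in justifying $\nabla F=f$ on $\partial A$, where $d(\cdot,A)$ itself is not differentiable, and on the level set $\{d(\cdot,A)=\epsilon\}$, where $\Phi'$ transitions between two formulas; these are handled respectively by the vanishing $\Phi'(0)=0$ and by the continuity $\Phi'(\epsilon^-)=\Phi'(\epsilon^+)=\epsilon$. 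In any case one may bypass smoothness entirely and invoke monotonicity of the subdifferential of a convex function, which needs no differentiability hypothesis.
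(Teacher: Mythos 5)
Your proof is correct and takes a genuinely different, and in fact cleaner, route than the paper's. The paper proves Lemma~\ref{lem3} by a direct geometric case analysis: it splits into the three cases $\eta,\eta+\xi\in\bar A$; $\eta,\eta+\xi\in A^\epsilon\setminus\bar A$; and $\eta,\eta+\xi\notin A^\epsilon$, argues the remaining cases reduce to these, and then in each nontrivial case constructs auxiliary planes and hyperplanes and appeals to the obtuse-angle property of the projection onto a convex set. That argument is elementary but intricate, and several of its sub-steps (including \eqref{lem2-0}) are themselves left to ``a similar argument.'' You instead identify the vector field $f$ as the gradient of the convex potential $F(x)=\Phi(d(x,A))$ with $\Phi'(t)=\min(t,\epsilon)$, after which \eqref{lem1-2} is nothing but cyclical monotonicity of $\nabla F$ (or of $\partial F$, if one prefers to avoid the differentiability discussion). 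The verification $\nabla F=f$ is routine: on $A^\epsilon\setminus\bar A$ the chain rule gives $d(x,A)\cdot(x-P_A(x))/|x-P_A(x)|=x-x_0$; on the complement of $A^\epsilon$ it gives $\epsilon\,(x-x_0)/|x-x_0|$, which is $x_1-x_0$ since $x_0$ remains the nearest point of $\bar A$ to every point of the segment $[x_0,x]$, so $|x_1-x_0|=d(x_1,A)=\epsilon$; and on $\bar A$ the estimate $0\le F(x_0+h)\le\tfrac12|h|^2$ (from $1$-Lipschitzness of $d(\cdot,A)$) forces $\nabla F=0$. What this buys you beyond economy of exposition is unification: Lemma~\ref{lem1} is $|\Phi'|\le\epsilon$, Lemma~\ref{lem2} is the a.e.\ nonnegativity of the diagonal of the Hessian of a convex function, and even Lemma~\ref{lem4} can be read off from a lower bound on $\partial_{ii}F$ in $(A^\epsilon)^o\setminus\bar A$. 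The paper's approach, on the other hand, needs no convex-analysis vocabulary and directly exposes the obtuse-angle mechanism that is reused later in the proof of \eqref{e3}, so it is better adapted to the paper's self-contained style; but as a proof of the lemma on its own, yours is the shorter and more transparent one.
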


\begin{lem}\label{lem2}
For every $i\in [k]$ and any fixed $x_1,\ldots, x_{i-1},x_{i+1},\ldots, x_k$, $f_i$ is absolutely continuous in $x_i$ and
\besn{
\partial_i f_i(x) \ge 0\quad  \text{a.e.} .
}
\end{lem}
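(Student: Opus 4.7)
My plan is to combine the inequality of Lemma \ref{lem3} (which will yield monotonicity of each $f_i$ in $x_i$) with a global Lipschitz estimate on $f$ (which will yield absolute continuity). For the monotonicity step, I apply Lemma \ref{lem3} with $\xi = t e_i$, where $e_i$ is the $i$-th standard basis vector, and $\eta \in \mathbb{R}^k$ arbitrary. This gives $t(f_i(\eta + t e_i) - f_i(\eta)) \ge 0$ for every $t \in \mathbb{R}$. Freezing the coordinates $x_j$ with $j \ne i$, it follows that $s \mapsto f_i(x_1,\ldots,x_{i-1},s,x_{i+1},\ldots,x_k)$ is nondecreasing on $\mathbb{R}$.

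For absolute continuity, I will establish the representation $f(x) = \pi_{A^{\epsilon}}(x) - \pi_{\bar{A}}(x)$, where $\pi_S$ denotes the nearest-point projection onto a closed convex set $S$. On $\bar{A}$ both projections coincide with $x$, giving $f = 0$; on $A^{\epsilon} \setminus \bar{A}$ one has $\pi_{A^{\epsilon}}(x) = x$ and $\pi_{\bar{A}}(x) = x_0$, so $f(x) = x - x_0$ as required. For $x \notin A^{\epsilon}$, I need to show that the constructed point $x_1 = x_0 + \epsilon (x - x_0)/|x - x_0|$ equals $\pi_{A^{\epsilon}}(x)$. The key geometric fact is that, by convexity of $\bar{A}$, the projection of any point on the segment $[x_0, x]$ onto $\bar{A}$ remains $x_0$, so that $x_1 \in \partial A^{\epsilon}$; combining this with the inequality $|x - z| \ge |x - x_0| - \epsilon = |x - x_1|$ for every $z \in A^{\epsilon}$ (which follows from writing $z = y + w$ with $y \in \bar{A}$ and $|w| \le \epsilon$) identifies $x_1$ as the nearest point in $A^{\epsilon}$ to $x$.

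Since metric projections onto closed convex subsets of $\mathbb{R}^k$ are $1$-Lipschitz, the representation from the previous paragraph makes $f$ globally $2$-Lipschitz on $\mathbb{R}^k$. In particular $f_i$ is $2$-Lipschitz in $x_i$ for any fixed choice of the remaining coordinates, hence absolutely continuous, and its partial derivative $\partial_i f_i$ exists almost everywhere. The monotonicity from the first paragraph then forces $\partial_i f_i \ge 0$ a.e., completing the proof. The main obstacle is the identification $\pi_{A^{\epsilon}}(x) = x_1$ in the outer region, since this is where the explicit geometric construction of $f$ must be matched with the projection picture; everything else reduces to standard properties of convex projections.
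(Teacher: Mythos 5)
Your proof is correct, and the key novelty is the clean representation $f = \pi_{A^\epsilon} - \pi_{\bar A}$ as a difference of two metric projections onto closed convex sets. This differs from the paper's route: the paper establishes that $f_i$ is $1$-Lipschitz in direction $e_i$ by a case-by-case geometric argument (reducing via Lemma \ref{lem3} and \eqref{lem2-0} to the cases $e_i \cdot (x - x_0) \le 0$, and locating the projected nearest point inside a certain trapezoid), whereas you obtain global $2$-Lipschitzness of $f$ at once from the nonexpansiveness of each projection. The monotonicity step — applying Lemma \ref{lem3} with $\xi = t e_i$ to get that $f_i$ is nondecreasing in $x_i$ — is the same in both proofs. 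The trade-off is that the paper's argument yields the sharper directional constant $1$ (which it uses implicitly in the proof of Lemma \ref{lem4}), while yours gives a softer constant $2$ but is more conceptual, avoids case analysis entirely, and hinges only on the identification $\pi_{A^\epsilon}(x) = x_1$ in the outer region, which you justify correctly via the supporting-hyperplane property of $x_0$ (so the segment $[x_0,x]$ projects to $x_0$ and crosses $\partial A^\epsilon$ at $x_1 = x_0 + \epsilon(x-x_0)/|x-x_0|$) and the triangle inequality $|x - z| \ge |x-y| - |w| \ge |x - x_0| - \epsilon$ for $z = y + w \in \bar A + \bar B(0,\epsilon) = A^\epsilon$. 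Both approaches are valid; for the purposes of Lemma \ref{lem2} alone (absolute continuity plus a.e.\ sign of the derivative), your argument fully suffices.
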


For $x\in (A^{\epsilon})^o \backslash \bar{A}$, where $A^o$ is the interior of $A$, we have a shaper lower bound for $\partial_i f_i(x)$. Let $\theta=(\theta_1,\theta_2,\ldots,\theta_k)^t$ be the angles between $x-x_0$ and the axes.

\begin{lem}\label{lem4}
For all $i\in [k]$, $x\in (A^{\epsilon})^o \backslash \bar{A}$,
\besn{\label{lem1-3}
\partial_i f_i(x) \ge \cos ^2 \theta_i \quad \text{a.e.}.
}
\end{lem}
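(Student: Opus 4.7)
\textbf{Proof plan for Lemma~\ref{lem4}.}

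The approach is to exploit the explicit representation $f(x) = x - P_A(x)$ on $(A^\epsilon)^o \setminus \bar A$, where $P_A$ denotes the metric projection onto $\bar A$. Convexity of $A$ makes $P_A$ a $1$-Lipschitz map, hence differentiable almost everywhere by Rademacher's theorem; at any such point $\nabla f(x) = I - M(x)$ with $M(x) := \nabla P_A(x)$, so the lemma reduces to the diagonal bound $e_i \cdot M(x) e_i \le \sin^2 \theta_i$, which gives $\partial_i f_i(x) = 1 - e_i \cdot M(x) e_i \ge \cos^2 \theta_i$.

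The whole argument rests on two structural facts about $M(x)$ that I would establish first. Fact (i): $M(x)$ is symmetric positive semi-definite with operator norm at most $1$, hence with spectrum contained in $[0,1]$. Symmetry and positive semi-definiteness come from the identity $P_A = \nabla \psi$ for the convex function
\begin{align*}
\psi(x) = \sup_{y \in A}\bigl(x \cdot y - \tfrac{1}{2}|y|^2\bigr) = \tfrac{1}{2}\bigl(|x|^2 - d(x,A)^2\bigr),
\end{align*}
together with Alexandrov's theorem, which identifies $M(x)$ with $\nabla^2 \psi(x)$ almost everywhere; positive semi-definiteness alone can also be read off from the monotonicity inequality $(x'-x) \cdot (P_A(x') - P_A(x)) \ge |P_A(x') - P_A(x)|^2$ obtained by summing the two defining variational inequalities of the projection at $x$ and $x'$. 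The norm bound is just the $1$-Lipschitz property. Fact (ii): setting $x_0 := P_A(x)$, $r := d(x,A) > 0$ and $n := (x - x_0)/r$, one has $M(x) n = 0$. Indeed $n$ lies in the normal cone $N_A(x_0)$, so $P_A(x_0 + t n) = x_0$ for every $t$ in a two-sided neighborhood of $r$ small enough to keep $x_0 + tn$ inside $A^\epsilon \setminus \bar A$, forcing the directional derivative of $P_A$ at $x$ along $n$ to vanish.

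Granted (i) and (ii), decompose $e_i = \cos \theta_i \cdot n + \sin \theta_i \cdot \tau_i$ for some unit vector $\tau_i$ orthogonal to $n$. Symmetry of $M(x)$ together with $M(x) n = 0$ make the cross terms in the expansion drop out:
\begin{align*}
e_i \cdot M(x) e_i = \sin^2 \theta_i \cdot \tau_i \cdot M(x) \tau_i \le \sin^2 \theta_i,
\end{align*}
where the last inequality holds because $\tau_i \cdot M(x) \tau_i$ is controlled by the largest eigenvalue of $M(x)$, which lies in $[0,1]$ by (i). This yields $\partial_i f_i(x) \ge \cos^2 \theta_i$ on the full-measure subset of $(A^\epsilon)^o \setminus \bar A$ where both $P_A$ is differentiable and the Alexandrov conclusion for $\psi$ applies.

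The main obstacle I anticipate is the rigorous justification of symmetry of $M(x)$ at a generic point of differentiability; the cleanest route is Alexandrov's theorem applied to the convex potential $\psi$, which seems essentially unavoidable if one wants to treat $M(x)$ as a genuine symmetric matrix. Everything else is elementary convex geometry and linear algebra, and the ``a.e.'' qualifier in the statement absorbs the measure-zero set on which these regularity assertions could fail.
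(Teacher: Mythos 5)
Your proposal is correct, but it takes a genuinely different route from the paper. The paper argues by elementary Euclidean geometry: it fixes $x$, constructs two hyperplanes $p_1$ (through $x_0$ orthogonal to $x-x_0$) and $p_2$ (through the appropriate projection of $x+he_i$, orthogonal to $x_0-(x+he_i)'$), shows that $(x+he_i)_0$ must lie in the intersection of the corresponding half-spaces, and reads off the pointwise difference-quotient inequality $f_i(x+he_i) - f_i(x) \ge \cos^2\theta_i\, h$ for all small $h>0$. Since $\partial_i f_i$ exists a.e.\ by Lemma~\ref{lem2}, the lower bound follows. Your argument instead invokes the machinery of convex analysis: $P_A$ is $1$-Lipschitz (Rademacher), $P_A = \nabla\psi$ for the convex potential $\psi = \tfrac12(|x|^2 - d(x,A)^2)$ (Alexandrov), $M(x)n=0$ by constancy of $P_A$ along the normal ray, and finally the spectral bound on the symmetric part. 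The two approaches buy different things: the paper's is self-contained, requires no appeal to regularity theorems, and in fact establishes the one-sided difference-quotient bound at \emph{every} point of $(A^\epsilon)^o\setminus\bar A$, so the ``a.e.'' absorbs only the failure of $\partial_i f_i$ to exist; your route is shorter and more conceptual once Alexandrov is granted, and cleanly isolates the geometric content as ``the derivative of the metric projection kills the outward normal direction.'' You are right that the symmetry of $M(x)$ is the delicate point: Rademacher alone gives a Jacobian, and the monotonicity inequality gives only positivity of its quadratic form, but the cross-term $n\cdot M(x)\tau_i$ genuinely needs symmetry to vanish, for which one must use the form of Alexandrov's theorem (e.g.\ Rockafellar--Wets, or Mignot's theorem) asserting that a.e.\ the Fr\'echet derivative of $\nabla\psi$ exists and coincides with the symmetric Alexandrov Hessian; the coarser second-order Taylor statement for $\psi$ alone would only pin down the symmetric part of $M(x)$.
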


We defer the proofs of the lemmas to Section 4. To obtain a concentration inequality for a random vector $W$ of interest, we apply the above defined function $f$ in the Stein identity for $W$. We first derive a concentration inequality for multivariate Gaussian vectors, then derive a general concentration inequality and apply it to sums of independent and locally dependent random vectors.

\subsection{Multivariate normal distribution}

\begin{prop}\label{p1}
Let $Z=(Z_1,Z_2,\ldots,Z_k)^t$ be a $k$-dimensional standard
Gaussian random vector. Then for any convex set $A$ in $\mathbb{R}^k$ and
$\epsilon_1, \epsilon_2\ge 0$,
\besn{\label{p1-1}
\P (Z\in A^{\epsilon_1}\backslash A^{-\epsilon_2})\le k^{1/2}(\epsilon_1+\epsilon_2)
}
where $A^\epsilon=\{x\in \mathbb{R}^k: d(x, A)\le \epsilon\}$ and $A^{-\epsilon}=\{x\in \mathbb{R}^k: B(x,\epsilon)\subset A\}$ where $B(x, \epsilon)$ is the $k$-dimensional ball centered in $x$ with radius $\epsilon$.
\end{prop}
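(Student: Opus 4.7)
The plan is to apply the multivariate Stein identity for the standard Gaussian,
\[
\E[Z\cdot f(Z)] \;=\; \E\Bigl[\sum_{i=1}^k \partial_i f_i(Z)\Bigr],
\]
to the vector field $f := f(A^{-\epsilon_2},\,\epsilon_1+\epsilon_2)$, i.e.\ the construction from the start of this section applied to the shrunken convex body $A^{-\epsilon_2}$ with width $\epsilon_1+\epsilon_2$. The degenerate case $A^{-\epsilon_2}=\emptyset$ is handled separately: in that situation $A$ is contained in a slab of thickness less than $2\epsilon_2$ along some direction, so $A^{\epsilon_1}$ lies in a slab of thickness at most $2(\epsilon_1+\epsilon_2)$, whose Gaussian measure is bounded by $2(\epsilon_1+\epsilon_2)/\sqrt{2\pi}$, comfortably below $\sqrt{k}(\epsilon_1+\epsilon_2)$.

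For the left-hand side of Stein's identity, Lemma~\ref{lem1} gives $|f(Z)| \leq \epsilon_1+\epsilon_2$, so the Cauchy--Schwarz inequality together with $\E|Z|\leq (\E|Z|^2)^{1/2}=\sqrt{k}$ yields $\E[Z\cdot f(Z)] \leq (\epsilon_1+\epsilon_2)\,\E|Z| \leq \sqrt{k}(\epsilon_1+\epsilon_2)$. For the right-hand side, write $B:=A^{-\epsilon_2}$ and $\epsilon:=\epsilon_1+\epsilon_2$. Lemma~\ref{lem4} gives $\partial_i f_i(x) \geq \cos^2 \theta_i$ on the open shell $(B^\epsilon)^o \setminus \bar B$, where the $\theta_i$ are the angles between the unit vector $(x-x_0)/|x-x_0|$ and the coordinate axes, so $\sum_i \cos^2\theta_i = 1$ there; combined with the a.e.\ nonnegativity from Lemma~\ref{lem2}, this yields $\E[\sum_i \partial_i f_i(Z)] \geq \P(Z \in B^\epsilon\setminus B)$. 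Putting these two estimates together via Stein's identity produces $\P(Z\in (A^{-\epsilon_2})^{\epsilon_1+\epsilon_2}\setminus A^{-\epsilon_2}) \leq \sqrt{k}(\epsilon_1+\epsilon_2)$.

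The remaining step is to pass from this to the stated bound on $\P(Z\in A^{\epsilon_1}\setminus A^{-\epsilon_2})$, which amounts to the Minkowski-type containment $A^{\epsilon_1}\subseteq (A^{-\epsilon_2})^{\epsilon_1+\epsilon_2}$---equivalently, that every point of $A$ lies within Euclidean distance $\epsilon_2$ of $A^{-\epsilon_2}$. I expect this geometric identification to be the main obstacle: it is immediate from a supporting-hyperplane argument when $\partial A$ is smooth with reach at least $\epsilon_2$, but has to be verified more carefully for general convex $A$, where naive inclusion can fail at sharp corners. The natural way to reduce the general convex case to the smooth one is to approximate $A$ by smooth convex bodies $A_n$ and pass to the limit via continuity of the Gaussian measure under Hausdorff convergence, which is legitimate since the bound $\sqrt{k}(\epsilon_1+\epsilon_2)$ is independent of the approximation.
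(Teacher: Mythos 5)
Your first step --- applying Stein's identity with $f(A,\epsilon)$ to obtain $\P(Z\in A^\epsilon\setminus A)\le k^{1/2}\epsilon$ --- matches the paper's and is correct. The gap is in the passage to the two-sided bound. The containment $A^{\epsilon_1}\subseteq (A^{-\epsilon_2})^{\epsilon_1+\epsilon_2}$, which you correctly identify as the crux, is not merely delicate at corners; it is \emph{false} for general convex $A$, in a way that smoothing cannot repair. The containment is equivalent to the rolling-ball condition at scale $\epsilon_2$: a ball of radius $\epsilon_2$ must touch every boundary point of $A$ from the inside. A rectangle already fails at its corners (the corner $(0,0)$ of $[0,L]\times[0,1]$ lies at distance $\epsilon_2\sqrt{2}$ from the inner parallel body), and a thin cone fails unboundedly: the apex of $\{(x,y):y\ge c|x|\}$ lies at distance $\epsilon_2\sqrt{1+c^2}$ from its $\epsilon_2$-inner parallel body. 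Consequently any smooth convex $A_n$ satisfying the rolling-ball condition at scale $\epsilon_2$ must round off a corner of $A$ with radius of curvature at least $\epsilon_2$, which keeps $d_H(A_n,A)$ bounded away from zero, so you cannot pass to the limit. Your degenerate case is also off: failure to inscribe a ball of radius $\epsilon_2$ only forces the width of $A$ to be $O(k\epsilon_2)$ (a regular simplex is essentially extremal), not $2\epsilon_2$, so the slab bound is not below $\sqrt{k}(\epsilon_1+\epsilon_2)$ once $k$ is moderately large.

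The paper closes this with a genuinely different second step, borrowed from Bhattacharya and Rao. Instead of realizing $A^{\epsilon_1}\setminus A^{-\epsilon_2}$ as a single $\epsilon$-annulus of some convex body, it approximates $A$ by a polytope $P=\{x:u_j\cdot x\le d_j\}$ and uses the one-parameter family $P_a=\{x:u_j\cdot x\le d_j+a\}$, writing $\P(Z\in P_{\epsilon_1+\delta}\setminus P_{-\epsilon_2})=\int_{-\epsilon_2}^{\epsilon_1+\delta}\int_{\partial P_a}\phi\,d\lambda_{k-1}\,da$ and then bounding the surface integral $\int_{\partial P_a}\phi\,d\lambda_{k-1}$ by $k^{1/2}$ uniformly in $a$ using the one-sided inequality you already have. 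The $a$-offset family is the right object precisely because the coarea decomposition pairs it with the surface measure; Euclidean inner/outer parallel bodies do not nest correctly at corners. You would need this disintegration (or something of equivalent strength) to bridge from the one-sided inequality to the stated proposition.
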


\begin{proof}
From the joint independence among $\{Z_1,Z_2,\ldots,Z_k\}$ and the
integration by parts formula, we have the following $k$ functional
identities for $Z$.
\besn{
\E Z_1f_1(Z)&=\E \partial_1 f_1(Z), \\
&\cdots \\
\E Z_kf_k(Z)&=\E \partial_k f_k(Z).
}

Using the function $f=f(A, \epsilon)$ defined at the beginning of this section where $A$ is a convex set in $\mathbb{R}^k$ and $\epsilon>0$ and summing up the above $k$ equations, we have
\ben{\label{p1.0}
\sum_{j=1}^k \E Z_jf_j(Z)=\sum_{j=1}^k \E \partial_j f_j(Z).
}
By Lemma \ref{lem1}, LHS of \eq{p1.0}$\le \epsilon\E|Z| = k^{1/2} \epsilon$. By Lemma \ref{lem2} and Lemma \ref{lem4},
\besn{
\text{RHS of}\  \eq{p1.0}&\ge \sum_{j=1}^k \E \partial_j f_j(Z) I(Z\in (A^\epsilon)^o \backslash \overline{A}) \\
&\ge \E \sum_{j=1}^k \cos^2 \theta_j I(Z\in (A^\epsilon)^o \backslash \overline{A})=\P (Z\in (A^\epsilon)^o \backslash \overline{A}).
}
Therefore,
\besn{\label{p1-2}
\P(Z\in A^\epsilon\backslash A)\le k^{1/2} \epsilon.
}
The bound (\ref{p1-1}) can be deduced from \eq{p1-2} by the arguments in Section 1.3 of \cite{BhRa86} sketched as follows.

Without loss of generality, assume $A^o \ne \emptyset$. First suppose $A$ is bounded. Given any $\delta>0$, we may choose $x_1,x_2,\ldots, x_n \in \partial A$ such that $\partial A \subset \{x_1,\ldots, x_n\}^\delta$. Let $P$ be the convex hull of $\{x_1,\ldots, x_n\}$. By taking $\delta$ small enough, $P^o\ne \emptyset$. For some positive integer $m$, $P$ can be expressed as
\beq
P=\{x\in \mathbb{R}^k: u_j \cdot x\le d_j, 1\le j\le m\}
\eeq
where $u_j$'s are distinct unit vectors and $d_j$'s are real numbers. For each real $a$, define
\beq
P_a=\{x\in \mathbb{R}^k: u_j \cdot x\le d_j+a, 1\le j\le m\}.
\eeq
Then from the fact that $P\subset A \subset P^\delta$, we have
\beq
A^{\epsilon_1}\backslash A^{-\epsilon_2} \subset (P^\delta)^{\epsilon_1}\backslash P_{-\epsilon_2}\subset P_{\epsilon_1+\delta}\backslash P_{-\epsilon_2}.
\eeq
Therefore,
\besn{\label{p1-3}
\P (Z\in A^{\epsilon_1}\backslash A^{-\epsilon_2}) \le \P(Z\in P_{\epsilon_1+\delta}\backslash P_{-\epsilon_2})=\int_{-\epsilon_2}^{\epsilon_1+\delta} \int_{\partial P_a} \phi d\lambda_{k-1}da
}
where $\phi$ is the density of standard $k$-dimensional normal distribution and $\lambda_{k-1}$ is the Lebesgue measure in $\mathbb{R}^{k-1}$. We used Lemma 3.9 in \cite{BhRa86} in the last equality. From the arguments leading to (3.35) in \cite{BhRa86},
\beq
|\P(Z\in (P_a)^\epsilon \backslash P_a)-\epsilon \int_{\partial P_a} \phi d\lambda_{k-1} |\le o(\epsilon), \ \text{as}\ \epsilon \rightarrow 0.
\eeq
The above inequality and (\ref{p1-2}) result in
\beq
\int_{\partial P_a} \phi d\lambda_{k-1} \le k^{1/2}.
\eeq
Therefore, from (\ref{p1-3}),
\beq
\P(Z\in A^{\epsilon_1}\backslash A^{-\epsilon_2})\le k^{1/2} (\epsilon_1+\epsilon_2+\delta).
\eeq
The bound (\ref{p1-1}) is proved by letting $\delta\rightarrow 0$. If $A$ is unbounded, consider $A_r=A\cap B(0,r)$ and let $r\rightarrow \infty$.
\end{proof}

\begin{rem}
It is known that $\P (Z \in A^{\epsilon_1} \backslash A^{-\epsilon_2})\le 4k^{1/4} (\epsilon_1+\epsilon_2)$, which is of optimal order in $k$ (see \cite{Ba93} and \cite{Be03}). It is not clear how we can obtain $k^{1/4}$ in the bound by our approach.
\end{rem}

\subsection{General concentration inequalities}

\begin{prop}\label{p3.1}
Let $(W,W')$ be an exchangeable pair ($\mathcal{L}(W, W')=\mathcal{L}(W', W)$) of $k$-dimensional random vectors. Let $D=(D_1,\dots, D_k)^t=W'-W$. Suppose
\be{
\E |D|^3 < \infty,\quad \inf_{\xi\in S^{k-1}} \E (D\cdot \xi)^2>0
}
where $S^{k-1}$ denotes the unit $(k-1)$-dimensional sphere.
Define
\ben{\label{p3.1-0}
\delta:={2\E |D|^3 \over \inf_{\xi\in S^{k-1}} \E (D\cdot \xi)^2}.
}
Then for any convex set $A$ in $\mathbb{R}^k$ and any $\epsilon>0$, we have
\besn{\label{p3.1-1}
\P(W\in A^{\epsilon+\delta}\backslash A^\delta)&\leq \frac{1}{\inf_{\xi\in S^{k-1}} \E (D\cdot \xi)^2} \Bigg\{ 
\frac{16}{3}(\epsilon+2\delta)\sqrt{\sum_{j=1}^k \Var[\E(D_j|W)]}\\
&\kern4em + 2\sqrt{\sum_{j,j_1=1}^k \Var\left\{ \E \left[D_j D_{j_1}I(|D|\leq \delta)|W \right] \right\}}
 \Bigg\}.
}
\end{prop}
\begin{rem}
For $\delta_1\geq \delta$, $A^{\epsilon+\delta_1}\backslash A^{\delta_1}=(A^{\delta_1-\delta})^{\epsilon+\delta}\backslash (A^{\delta_1-\delta})^{\delta}$. Therefore, \eq{p3.1-1} remains true if we replace $\delta$ on the left-hand side by any constant $\delta_1\geq \delta$.
A $1$-dimensional version of the above proposition can be found in Lemma 2.1 of \cite{ChFa13}. As in the $1$-dimensional case, truncating $|D|$ at $\delta$ allows us to keep within third moments.
Also we do not require $\E(D|W)$ to be approximately linear in $W$ as usually assumed in the literature on Stein's method of exchangeable pairs. Therefore, we are able to apply Proposition \ref{p3.1} to the local dependence case.
\end{rem}
\begin{rem}
For the concentration inequality to be useful, the denominator $\inf_{\xi\in S^{k-1}} \E (D\cdot \xi)^2$ in\eq{p3.1-1} should not be too small, in other words, $W'$ needs to be different from $W$ in every direction. For technical reasons, we are only able to bound $\P(W\in A^{\epsilon+\delta}\backslash A^\delta)$ instead of $\P(W\in A^\epsilon\backslash A)$. This does not affect the final bound in multivariate normal approximation when $\delta$ is small.
\end{rem}
\begin{proof}[Proof of Proposition \ref{p3.1}]
We use $f=f(A, \epsilon+2\delta)$ defined at the beginning of this section in the Stein identity for $W$
\be{
\E (W'-W) \cdot (f(W')+f(W))=0,
}
which follows from the exchangeability of $(W,W')$ and implies
\ben{\label{60}
-2 \E (W'-W)\cdot f(W)=\E (W'-W)\cdot (f(W')-f(W)).
}
Because $|f|\le \epsilon+2\delta$ by Lemma \ref{lem1},
\besn{\label{p3.11}
\text{LHS of (\ref{60})} &\le 2(\epsilon+2\delta)\E|\E(W'-W|W)|\\
&\leq 2(\epsilon+2\delta)\sqrt{\E \sum_{j=1}^k [\E(D_j|W)]^2}\\
&=2(\epsilon+2\delta)\sqrt{\sum_{j=1}^k \Var[\E(D_j|W)]}.
} 
From Lemma \ref{lem3},
\bes{
&\text{RHS of (\ref{60})}\nn \\
&\ge  \E D \cdot (f(W')-f(W))I(|D|\le \delta)I(W\in A^{\epsilon+\delta} \backslash A^{\delta}) \\
&=  \E \big\{  \sum_{j=1}^2 (D \cdot H_{j})  ( f(W') \cdot H_{j} -f(W)\cdot H_{j} )   \big\}    I(|D|\le \delta) I(W \in A^{\epsilon+\delta} \backslash A^{\delta})
}
where we used the (random) orthonormal basis $\{H_{1},\ldots, H_{k}\}$ defined as follows. For each $W=w\in A^{\epsilon+\delta} \backslash A^{\delta}$ and $D=d$, define an orthonormal basis $\{H_{1},\ldots, H_{k}\}=\{h_{1},\ldots, h_{k}\}$ such that $h_{1}$ and $w-w_0$ are parallel and $h_{2}$ and $d-(d \cdot h_{1})h_{1}$ are parallel ($0$-vector is parallel to any vector). Recall that $w_0$ is the nearest point in $\bar{A}$ from $w$. Then,
\bes{
\text{RHS of (\ref{60})}&\ge  \E \biggl\{ (D \cdot H_{1})(f(W+(D \cdot H_{1}) H_{1})\cdot H_{1}-f(W) \cdot H_{1}) \\ 
&\qquad+ (D \cdot H_{1}) (f(W+D)\cdot H_{1}-f(W+(D \cdot H_{1})H_{1})\cdot H_{1}) \\
&\qquad+ (D \cdot H_{2}) (f(W+(D \cdot H_{1}) H_{1})\cdot H_{2}-f(W) \cdot H_{2}) \\
&\qquad +(D \cdot H_{2} ) (f(W+D)\cdot H_{2}-f(W+(D \cdot H_{1})H_{1})\cdot H_{2}) \biggr\} \\
&\kern6em \times I(|D|\le \delta) I(W\in A^{\epsilon+\delta} \backslash A^{\delta}).
}
If $w\in A^{\epsilon+\delta} \backslash A^{\delta}, |d|\le \delta$, then we have
\besn{\label{e1}
f(w+(d \cdot h_{1})h_{1})\cdot h_{1}-f(w)\cdot h_{1}=d \cdot h_{1},
}
\besn{\label{e2}
f(w+(d \cdot h_{1} )h_{1})\cdot h_{2}-f(w)\cdot h_{2}=0
}
and
\besn{\label{e3}
&(d\cdot h_{2})(f(w+d)\cdot h_{2}-f(w+(d \cdot h_{1})h_{1})\cdot h_{2}) \\
&\ge (f(w+d)\cdot h_{1}-f(w+(d\cdot h_{1})h_{1})\cdot h_{1})^2.
}
Equations (\ref{e1}) and (\ref{e2}) follow from $f(w+(d\cdot h_{1})h_{1})=f(w)+(d\cdot h_{1})h_{1}$. For (\ref{e3}), consider the plane $p$ parallel to $h_{1}, h_{2}$ and containing $w$. Let $l$ be the line parallel to $h_{2}$ and containing $w_0$. The line $l$ divides $p$ into two parts $p_1, p_2$ where $p_1$ is closed and $p_2$ is open and contains $w$. Draw a circle on $p$ with diameter $[w_0, w+d]$. Then $(w+d)'$, the projection of $(w+d)_0$ on $p$, must be inside the circle (or on the perimeter) and on $p_1$ because of the convexity of $A$. Let $(w+d)''$ be the projection of $w+d$ on $l$, and let $(w+d)'''$ be the projection of $(w+d)'$ on $l$. Then, \eq{e3} follows from
\bes{
|((w+d)''-w_0)((w+d)'''-w_0)|\ge |(w+d)'-(w+d)'''|^2,
}
which is a consequence of the fact that the angle between $(w+d)''-(w+d)'$and $w_0-(w+d)'$ is greater than or equal to $\pi/2$. Using $ab\ge -a^2-b^2/4$,
\besn{\label{e4}
&(d\cdot h_{1})(f(w+d)\cdot h_{1}-f(w+(d \cdot h_{1})h_{1})\cdot h_{1})\\
&\ge -\frac{(d\cdot h_{1})^2}{4}-(f(w+d)\cdot h_{1} -f(w+(d \cdot h_{1})h_{1})\cdot h_{1})^2.
}
Applying \eq{e1}-\eq{e4}, we obtain a lower bound of RHS of (\ref{60}) as
\besn{\label{56}
\text{RHS of (\ref{60})} \ge \frac{3}{4} \E (D\cdot H_{1})^2 I(|D|\le \delta) I(W\in A^{\epsilon+\delta} \backslash A^{\delta}).
}
In other words, we have
\besn{
\text{RHS of (\ref{60})}
&\ge \frac{3}{4} \E (D \cdot \xi(W))^2I(|D|\le \delta) I(W\in A^{\epsilon+\delta} \backslash A^{\delta})\\
&=:R
}
where $\xi(W)=(W_0-W)/|W_0-W|$ for $W\in A^{\epsilon+\delta} \backslash A^{\delta}$ and $W_0$ is the nearist point in $\bar{A}$ from $W$. We may define $\xi(W)$ to be $e_1$, where $\{e_1,\ldots, e_k\}$ are the original orthonormal basis when $W\notin A^{\epsilon+\delta} \backslash A^{\delta}$, since it does not affect the value of $R$. We now derive a lower bound of $R$.
\bes{
R
&=\frac{3}{4} \E \sum_{j=1}^k D_{j}^2 \xi(W)_{j}^2 I(|D|\le \delta)I(W\in A^{\epsilon+\delta} \backslash A^{\delta}) \\
&\quad+\frac{3}{4} \E \sum_{j\ne j_1} D_{j}D_{j_1} \xi(W)_{j}\xi(W)_{j_1} I(|D|\le \delta)I(W\in A^{\epsilon+\delta} \backslash A^{\delta}) \\
&=:R_1+R_2.
}
For $R_1$,
\bes{
R_1
&= \frac{3}{4} \sum_{j=1}^k \E I(W\in A^{\epsilon+\delta} \backslash A^{\delta})\xi(W)_{j}^2 D_{j}^2 I(|D|\le \delta)\\
&=\frac{3}{4} \sum_{j=1}^k \E I(W\in A^{\epsilon+\delta} \backslash A^{\delta})\xi(W)_{j}^2 \left[ D_{j}^2 I(|D|\le \delta)-\E D_{j}^2 I(|D|\le \delta)\right]\\
&\quad+\frac{3}{4} \sum_{j=1}^k \E I(W\in A^{\epsilon+\delta} \backslash A^{\delta})\xi(W)_{j}^2 \E D_{j}^2 I(|D|\le \delta)\\
&=: R_{1,1}+R_{1,2}.
}
Using the inequality
\besn{\label{61}
ab\le \frac{\delta\theta}{4} a^2+\frac{b^2}{\delta\theta}
}
for a positive $\theta$ to be chosen,
\bes{
|R_{1,1}|&\le \frac{3}{4}\sum_{j=1}^k\biggl\{\frac{\delta\theta}{4} \E  \xi(W)_{j}^4 +\frac{1}{\delta\theta}\E \Bigl[ \E[D_{j}^2 I(|D|\le \delta)|W]-\E D_{j}^2 I(|D|\le \delta) \Bigr]^2\biggr\} \\
&= \frac{3}{4}\big\{ \frac{\delta\theta}{4} \sum_{j=1}^k \E \xi(W)_{j}^4 +\frac{1}{\delta\theta} \sum_{j=1}^k \Var[\E(D_{j}^2 I(|D|\le \delta)|W)]  \big\}.
}
We write $R_{1,2}$ as
\be{
R_{1,2}=\frac{3}{4} \sum_{j=1}^k \E I(W\in A^{\epsilon+\delta} \backslash A^{\delta})\xi(W)_{j}^2 [\E D_{j}^2-\E D_{j}^2 I(|D|> \delta)].
}
Similarly,
\bes{
R_2&=\frac{3}{4} \E \sum_{j\ne j_1} D_{j}D_{j_1} \xi(W)_{j}\xi(W)_{j_1} I(|D|\le \delta)I(W\in A^{\epsilon+\delta} \backslash A^{\delta})\\
&=\frac{3}{4}\sum_{j\ne j_1}\E I(W\in A^{\epsilon+\delta} \backslash A^{\delta})\xi(W)_{j}\xi(W)_{j_1} \\
&\qquad \times(D_{j}D_{j_1}I(|D|\le \delta)-\E D_{j}D_{j_1}I(|D|\le \delta))\\
&\quad+\frac{3}{4}\sum_{j\ne j_1}\E I(W\in A^{\epsilon+\delta} \backslash A^{\delta})\xi(W)_{j}\xi(W)_{j_1}\E D_{j}D_{j_1}I(|D|\le \delta)\\
&=:R_{2,1}+R_{2,2}
}
and
\be{
|R_{2,1}|\le  \frac{3}{4}\big\{ \frac{\delta\theta}{4} \sum_{j\ne j_1} \E [\xi(W)_{j} \xi(W)_{j_1}]^2 + \frac{1}{\delta\theta}\sum_{j\ne j_1}\Var[ \E (D_{j}D_{j_1}I(|D|\le \delta) |W)] \big\},
}
\be{
R_{2,2}=\frac{3}{4}\sum_{j\ne j_1}\E I(W\in A^{\epsilon+\delta} \backslash A^{\delta})\xi(W)_{j}\xi(W)_{j_1}\big[ \E D_{j}D_{j_1} - \E D_j D_{j_1}I(|D|> \delta) \big].
}
From the bounds on $|R_{1,1}|$ and $|R_{2,1}|$ and $|\xi(W)|=1$,
\bes{
|R_{1,1}|+|R_{2,1}|\le \frac{3\delta\theta}{16}+\frac{3}{4\delta\theta}\sum_{j,j_1=1}^k \Var[ \E (D_{j}D_{j_1}I(|D|\le \delta) |W)].
}
Also, with $\widetilde{W}$ being an independent copy of $W$,
\bes{
R_{1,2}+R_{2,2}&\geq \frac{3}{4} \E I(\widetilde{W}\in A^{\epsilon+\delta}\backslash A^{\delta})(\xi(\widetilde{W})\cdot D)^2-\frac{3 \E |D|^3}{4\delta} \P(W\in A^{\epsilon+\delta}\backslash A^{\delta})\\
&\geq \frac{3}{4}\P(\widetilde{W}\in A^{\epsilon+\delta}\backslash A^{\delta})(\inf_{\xi\in S^{k-1}} \E (D\cdot \xi)^2-\frac{\E|D|^3}{\delta}). 
}
The last two inequalities, along with \eq{p3.11}, yield
\bes{
&2(\epsilon+2\delta)\sqrt{\sum_{j=1}^k \Var[\E(D_j|W)]}+\frac{3\delta\theta}{16}+\frac{3}{4\delta\theta}\sum_{j,j_1=1}^k \Var[ \E (D_{j}D_{j_1}I(|D|\le \delta) |W)]\\
&\geq \frac{3}{4} \P (W\in A^{\epsilon+\delta}\backslash A^{\delta})(\inf_{\xi\in S^{k-1}} \E (D\cdot \xi)^2 - \frac{\E|D^3|}{4\delta}).
}
Proposition \ref{p3.1} is proved by recalling the definition of $\delta$ \eq{p3.1-0} and choosing
\be{
\theta=\frac{2}{\delta}\sqrt{\sum_{j,j_1=1}^k \Var[ \E (D_{j}D_{j_1}I(|D|\le \delta) |W)]}.
}
\end{proof}
Now we consider $W=\sum_{i=1}^n X_i$ being a sum of locally dependent $k$-dimensional random vectors. To avoid confusion, for a set of $n$ $k$-dimensional vectors, we always use $i,i_1,\dots\in [n]$ to index them, and use $j,j_1,\dots\in [k]$ to index their components. Assume (LD3), i.e., for each $i\in [n]$, there exist neighborhoods $A_i$, $B_i$, $C_i\subset [n]$ such that $X_i$ is independent of $\{X_{i_1}: i_1\notin A_i\}$, $\{X_{i_1}: i_1\in A_i\}$ is independent of $\{X_{i_1}: {i_1}\notin B_i\}$, and $\{X_{i_1}: i_1\in B_i\}$ is independent of $\{X_{i_1}: i_1\notin C_i\}$. For such a $W$, an exchangeable pair $(W,W')$ can be constructed as follows. 
Let $\{X_1^*,\dots,X_n^*\}$ be an independent copy of $\{X_1,\dots, X_n\}$ and, for $i=1,\dots, n$, let $\{X_{i_1i}': i_1\in A_i\backslash \{i\}\}$ and $\{X_{i_1}: i_1\in A_i\backslash \{i\}\}$ be conditionally independent given $\{X_{i_1}: i_1\notin A_i\}$ such that
\bes{
&\mathcal{L}\{X_{i_1i}': i_1\in A_i\backslash \{i\}\big| X_i^*=x_i, X_{i_1}=x_{i_1}\  \text{for}\  i_1\notin A_i\}\\
&=\mathcal{L}\{X_{i_1}: i_1\in A_i\backslash \{i\}\big| X_i=x_i, X_{i_1}=x_{i_1} \ \text{for}\  i_1\notin A_i\}.
}
Let $I$ be a uniform random index from $[n]$ and independent of all the other random vectors. Define
\be{
W'=W^I=W-\sum_{i_1\in A_I} X_{i_1}+X_I^*+\sum_{i_1\in A_I\backslash \{I\}} X_{i_1I}'.
}
Then $(W,W')$ is an exchangeable pair.
\begin{cor}\label{c3.3}
Let $W=\sum_{i=1}^n X_i$ be a sum of $k$-dimensional random vectors such that $\E X_i=0$ for each $i\in [n]$. Assume (LD3) with neighborhood size bounded by
\besn{\label{c3.3--1}
&|A_i|,\  \max_{i\in [n]}|\{i_1: i\in A_{i_1}\}|\leq \theta_1,\\
&\max_{i\in [n]}|\{i_1: B_{i_1}\cap C_i\ne \emptyset\}|,\ 
\max_{i\in [n]}|\{i_1: B_{i}\cap C_{i_1}\ne \emptyset\}|  \leq \theta_2.
}
Let the exchangeable pair $(W,W')$ be constructed as above and let $D=D^I=W'-W$. Define
\besn{\label{c3.3-0}
&\alpha:=2\inf_{\xi\in S^{k-1}} \sum_{i=1}^n \E [\Var(Y_i\cdot \xi|X_{B_i\backslash A_i})],\\
&\beta:=2\sum_{j=1}^k \sum_{i=1}^n \E [\Var(Y_{ij}|X_{B_i\backslash A_i})],\\
&\gamma:=\sum_{i=1}^n \E |X_i|^3,\quad \delta=\frac{16\theta_1 \gamma}{\alpha}
}
where 
\ben{\label{c3.3-2}
Y_{i}:=\sum_{i_1\in A_i} X_{i_1},\quad X_{B_i\backslash A_i}:=\{X_{i_1}: i_1\in B_i\backslash A_i\}.
}
Then, for any convex set $A$ in $\mathbb{R}^k$ and any $\epsilon>0$,
\ben{\label{c3.3-1}
\P(W\in A^{\epsilon+\delta}\backslash A^{\delta})\leq \frac{\theta_1^3 \sqrt{\theta_2}}{\alpha^{3/2}}(16\sqrt{2}+\frac{512}{3}\sqrt{\frac{\beta}{\alpha}})\gamma+\frac{16\sqrt{\theta_2 \beta}}{3\alpha}\epsilon.
}
\end{cor}
\begin{rem}
Assuming $\E W W^t=I_{k\times k}$, we have $\forall\ \xi\in S^{k-1}$, $\Var(W\cdot \xi)=1$.
Therefore, if $\theta_1$ and $\theta_2$ are of order $1$, then typically $\alpha$ is of order $1$ and $\beta$ is of order $k$, in which case the bound in \eq{c3.3-1} is of order
$k^{1/2}(\gamma+\epsilon)$
\end{rem}
\begin{proof}
We apply Proposition \ref{p3.1} with the exchangeable pair $(W,W')$ constructed as above Corollary \ref{c3.3} and
\be{
D=D^I=W'-W=\sum_{i\in A_I\backslash \{I\}}(X_{iI}'-X_i)+X_I^*-X_I.
}
From the construction of $W'$ and $\E(X-X')^2=2\Var(X)$ where $X'$ is an independent copy of $X$, we have for $\xi \in S^{k-1}$,
\ben{\label{c3.3-4}
\E(D^i \cdot \xi)^2=2\E[\Var(Y_i\cdot \xi|X_{B_i\backslash A_i})].
}
In particular, $\E (D_j^i)^2=2\E[\Var(Y_{ij}|X_{B_i\backslash A_i})]$.
By the inequality $(a_1+\dots+a_m)^3\leq m^2(|a_1|^3+\dots+|a_m|^3)$, the bound $\theta_1$ in \eq{c3.3--1} and the definition of $\gamma$ in \eq{c3.3-0},
\besn{\label{c3.3-3}
\E |D|^3&=\frac{1}{n} \sum_{i=1}^n \E |D^i|^3 =\frac{1}{n}\sum_{i=1}^n \E |\sum_{i_1\in A_i\backslash 
\{i\}}(X_{i_1}'-X_{i_1})+X_i^*-X_i|^3\\ 
&\leq \frac{4\theta_1^2}{n} \sum_{i=1}^n\Big[ \sum_{i_1\in A_i\backslash \{i\}} (\E|X_{i_1}'|^3+\E|X_{i_1}|^3) +\E|X_i^*|^3+\E|X_i|^3 \Big]\leq \frac{8\theta_1^3}{n}\gamma.
}
By the (LD3) assumption, the inequality $\cov(X,Y)\leq (\E X^2 + \E Y^2)/2$, and the bound $\theta_2$ in \eq{c3.3--1},
\bes{
\sum_{j=1}^k \Var(\E(D_j|W))&\leq \sum_{j=1}^k \Var(\frac{1}{n}\sum_{i=1}^n D^i_j)
= \frac{1}{n^2}\sum_{j=1}^k \sum_{i=1}^n \sum_{i_1: B_{i_1}\cap C_i\ne \emptyset}\cov(D^i_j, D^{i_1}_j) \\
&\leq \frac{1}{n^2}\sum_{j=1}^k \sum_{i=1}^n \sum_{i_1: B_{i_1}\cap C_i\ne \emptyset}\frac{\E(D^i_j)^2+\E(D^{i_1}_j)^2}{2}
 \leq  \frac{\theta_2}{n^2}\beta.
}
Similarly,
\bes{
&\sum_{j,j_1=1}^k \Var\left\{ \E \left[D_j D_{j_1}I(|D|\leq \delta)|W \right] \right\} \leq \sum_{j,j_1=1}^k \Var\left[ \frac{1}{n}\sum_{i=1}^n  D^i_j D^i_{j_1}I(|D^i|\leq \delta)  \right]\\
&=\frac{1}{n^2}\sum_{j,j_1=1}^k \sum_{i=1}^n \sum_{i_1: B_{i_1}\cap C_i\ne \emptyset}
\cov\left[ D^i_j D_{j_1}^i I(|D^i|\leq \delta), D_j^{i_1} D_{j_1}^{i_1} I(|D^{i_1}|\leq \delta)  \right]\\
&\leq \sum_{j,j_1=1}^k \frac{\theta_2}{n^2}\sum_{i=1}^n \E (D^i_j)^2(D^i_{j_1})^2I(|D^i|\leq \delta)\\
&\leq \frac{\theta_2}{n^2} \sum_{i=1}^n \E|D^i|^3 \delta\leq 8\theta_1^3\theta_2 \delta \gamma/n^2,
}
where in the last inequality we used \eq{c3.3-3}.
For $\delta$ in \eq{p3.1-0}, by \eq{c3.3-3} and \eq{c3.3-4},
\be{
\delta\leq \frac{16\theta_1^3 \gamma}{\alpha}.
}
The bound \eq{c3.3-1} follows from \eq{p3.1-1} and the above bounds.
\end{proof}
\begin{rem}\label{r2.10}
In Section 4, we will obtain multivariate normal approximation results for $W=\sum_{i=1}^n X_i$ under (LD3). Define
\be{
\alpha_1:=2\inf_{\xi\in S^{k-1}} \inf_{i_1\in [n]}  \sum_{i: B_{i}\cap C_{i_1}=\emptyset} \E [\Var(Y_i\cdot \xi|X_{B_i\backslash A_i})],
}
$\beta, \gamma$ as in \eq{c3.3-0}. We will need the following conditional concentration inequalities. Letting $\P^{X_{B_i}}$ denote the conditional probability given $X_{B_i}:=\{X_{i_1}: i_1\in B_i\}$, we have
\ben{\label{r2.10-2}
\P^{X_{B_i}} (W\in A^{\epsilon+\delta_1}\backslash A^{\delta_1})\leq \frac{\theta_1^3 \sqrt{\theta_2}}{\alpha_1^{3/2}}(16\sqrt{2}+\frac{512}{3}\sqrt{\frac{\beta}{\alpha_1}})\gamma+\frac{16\sqrt{\theta_2 \beta}}{3\alpha_1}\epsilon
}
where $\theta_1, \theta_2$ are as defined in \eq{c3.3--1},  $\epsilon, A$ may depend on $X_{B_i}$, and $\delta_1=16\theta_1^3 \gamma/\alpha_1$. To prove \eq{r2.10-2}, we first regard the conditional random vector as again a sum of locally dependent random vectors, then we construct an exchangeable pair as above Corollary \ref{c3.3} but with $I$ uniformly distributed over $\{i_1: B_{i_1}\cap C_i=\emptyset\}$. Finally we apply Proposition \ref{p3.1} with the modified exchangeable pair and proceed as in the proof of Corollary \ref{c3.3}.
\end{rem}
Corollary \ref{c3.3} has the following corollary for independent case.
\begin{cor}\label{p3}
Let $k$-dimensional random vector $W$ be
\[
W=(W_1,\ldots, W_k)^t=\sum_{i=1}^n X_i=\sum_{i=1}^n (X_{i1},X_{i2},\ldots,X_{ik})^t
\]
where $\mathbb{X}:=\{X_i: i\in [n]\}$ are independent random vectors such that $\E X_i=0$ and $\E WW^t= I_{k\times k}$. Then, for any convex set $A$ in $\mathbb{R}^k$,
\besn{\label{p3-1}
\P (W^{(i)}\in A^{4\gamma+\epsilon}\backslash A^{4\gamma})\le 4.1k^{1/2}\epsilon+39k^{1/2}\gamma
}
and
\besn{\label{p3-2}
\P (W\in A^{4\gamma+|X_i|}\backslash A^{4\gamma})\le 4.1k^{1/2}\E |X_i|+39k^{1/2}\gamma
}
for any $\epsilon>0$ and $i\in [n]$ where $W^{(i)}=W-X_i$ and $\gamma=\sum_{i=1}^n \E |X_i|^3$.
\end{cor}
\begin{proof}
For independent case, with $\E W W^t=I_{k\times k}$ in Corollary \ref{c3.3}
\be{
\theta_1=\theta_2=1,\  \alpha_1=2,\  \beta=2k,\  \delta_1=8\gamma.
}
Applying \eq{c3.3-1}, we have
\be{
\P(W\in A^{\epsilon+8\gamma}\backslash A^{8\gamma})\leq  k^{1/2}\left[ \frac{16}{3\sqrt{2}}\epsilon+\left( 8+\frac{256}{3\sqrt{2}} \right)\gamma \right].
}
The concentration inequalities \eq{p3-1} and \eq{p3-2} are obtained by considering $W^{(i)}$ instead, and the constants $4, 4.1$ and $39$ are obtained by simplifying the proof of Proposition \ref{p3.1} for independent case. Details can be found in \cite{Fa12}.
\end{proof}

\section{Multivariate normal approximation for sums of independent random vectors}

In this section, we prove a multivariate normal approximation result for sums of independent random vectors by applying the concentration inequality approach in Stein's method.
A multivariate version of the Stein equation was given in \cite{Go91} as well as in \cite{Ba90} as follows.
\ben{\label{Stein equation}
\triangle f(w)- w\cdot \nabla f(w) = h(w) -\E h(Z)
}
where $h$ is a test function and $Z$ is a standard $k$-dimensional Gaussian random vector, and $\triangle$ and $\nabla$ denote Laplace and gradient operator respectively.

If the test function $h$ is smooth enough, the above equation can be solved and one of its solution can be expressed as $f(w)=\int_0^1 g(w,s)ds$ where
\ben{\label{g epsilon}
g(w,s)=-\frac{1}{2(1-s)} \int_{\mathbb{R}^k}
    [h(\sqrt{1-s}w+\sqrt{s}z)-\E h (Z)]\phi(z)dz,
}
where $\phi(z)$ is the density function of the $k$-dimensional standard normal distribution at $z\in \mathbb{R}^k$. 
We will write $\partial_j g(w,s)=\partial g(w,s)/\partial w_j$ and so on.
When $\nabla h$ is Lipschiz,
\besn{\label{Dg}
\partial_{jj_1}g(w,s)&=-\frac{1}{2s} \int_{\mathbb{R}^k} h(\sqrt{1-s}w+\sqrt{s}z) \partial_{jj_1}\phi(z) dz  \\
&\quad+  \frac{1}{2\sqrt{s}} \int_{\mathbb{R}^k} \partial_{j_1} h(\sqrt{1-s}w+\sqrt{s}z) \partial_j \phi(z) dz.
}
The class of test functions we are interested in is
$h=I_A$ where $A$ is a convex set in $\mathbb{R}^k$. A smoothed version of such an $h$ was introduced by \cite{Be03} as
\ben{\label{h epsilon}
h_\epsilon (w) = \psi(\frac{d(w, A)}{\epsilon})
}
where $\epsilon>0$ and
\beqn
\psi(x)=
\begin{cases}
1, & x<0\\
1-2x^2, & 0\le x<\frac{1}{2}\\
2(1-x)^2, & \frac{1}{2} \le x <1\\
0, & 1\le x.
\end{cases}
\eeqn
The next lemma was proved in \cite{Be03}.
\begin{lem}
The above defined function $h_\epsilon$ satisfies:
\ben{\label{h epsilon-1}
h_\epsilon(w)=1\  \text{for} \ w\in A,\quad h_\epsilon(w)=0 \ \text{for}\  w\in \mathbb{R}^k\backslash A^\epsilon, \quad 0\le h_\epsilon \le 1,
}
and
\ben{\label{h epsilon-2}
|\nabla h_\epsilon (w)|\le \frac{2}{\epsilon},\quad  |\nabla h_\epsilon (w_1)-\nabla h_\epsilon (w_2)|\le \frac{8|w_1-w_2|}{\epsilon^2}.
}
\end{lem}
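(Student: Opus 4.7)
The properties in \eq{h epsilon-1} are immediate from the construction: $w\in A$ gives $d(w,A)=0$ and hence $h_\epsilon(w)=\psi(0)=1$; $w\notin A^\epsilon$ gives $d(w,A)/\epsilon>1$ and $h_\epsilon(w)=0$; the sandwich $0\le\psi\le 1$ follows by inspecting the two quadratic pieces, whose extrema on the relevant subintervals fall in $\{0,1/2,1\}$. The first inequality in \eq{h epsilon-2} is the chain rule at points where $d(\cdot,A)$ is differentiable:
\bes{
\nabla h_\epsilon(w) = \frac{1}{\epsilon}\,\psi'\!\left(\frac{d(w,A)}{\epsilon}\right)\nabla d(w,A),
}
combined with $|\psi'|\le 2$ (the piecewise-linear derivative attains its maximum modulus at $x=1/2$) and $|\nabla d(\cdot,A)|\le 1$ (distance functions are $1$-Lipschitz).

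For the Lipschitz bound on $\nabla h_\epsilon$, my plan is to establish the stronger pointwise estimate $\|\nabla^2 h_\epsilon(w)\|\le 8/\epsilon^2$ almost everywhere and then integrate along line segments. Write $r(w)=d(w,A)$, let $\pi_A$ denote the metric projection onto $\bar A$, and $v(w)=(w-\pi_A w)/r(w)$ on $\{r>0\}$. The convex-analysis input I would use is that, by the firm nonexpansiveness of $\pi_A$, the map $w\mapsto w-\pi_A w$ is differentiable a.e.\ with $I-D\pi_A$ symmetric, positive semidefinite, and of spectrum in $[0,1]$; differentiating $\nabla r=v$ once more yields
\bes{
\nabla^2 r(w) = \frac{1}{r(w)}\left(I - vv^T - D\pi_A(w)\right).
}
Plugging these into the chain rule on each piece of $\psi$ gives: $\nabla^2 h_\epsilon = -(4/\epsilon^2)(I-D\pi_A)$ on $\{0<r<\epsilon/2\}$, of operator norm at most $4/\epsilon^2$; $\nabla^2 h_\epsilon = 0$ on $A^o\cup\{r>\epsilon\}$; and on the middle region $\{\epsilon/2<r<\epsilon\}$,
\bes{
\nabla^2 h_\epsilon(w) = \alpha(w)\,(I - D\pi_A) + \frac{4}{\epsilon\, r(w)}\,vv^T, \qquad \alpha(w) = \frac{4(r(w)-\epsilon)}{\epsilon^2\,r(w)} \in \left[-\frac{4}{\epsilon^2},\, 0\right].
}

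The main obstacle is extracting the sharp constant $8$ in the middle region. For any unit vector $u$ one has $u^T(I-D\pi_A)u\in[0,1]$ and $(u^Tv)^2\in[0,1]$, so since $\alpha(w)\le 0$,
\bes{
u^T\nabla^2 h_\epsilon(w)\,u \in \left[\alpha(w),\ \frac{4}{\epsilon\,r(w)}\right] \subset \left[-\frac{4}{\epsilon^2},\ \frac{8}{\epsilon^2}\right],
}
which delivers $\|\nabla^2 h_\epsilon\|\le 8/\epsilon^2$ throughout. A naive triangle inequality applied to the decomposition $\nabla h_\epsilon = -(4/\epsilon)v + (4/\epsilon^2)(w-\pi_A w)$ produces a strictly worse constant, so the joint sign structure of the two Hessian summands must be exploited exactly as above. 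To close, I would check that $\nabla h_\epsilon$ matches continuously across the transition surfaces $\{r=0\}$, $\{r=\epsilon/2\}$, $\{r=\epsilon\}$ by plugging in and verifying that the one-sided expressions agree; then integrating the a.e.\ Hessian bound along the segment from $w_1$ to $w_2$ produces $|\nabla h_\epsilon(w_1)-\nabla h_\epsilon(w_2)|\le (8/\epsilon^2)|w_1-w_2|$.
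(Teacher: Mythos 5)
Your proof is correct. Note first that the paper itself does not prove this lemma; it explicitly attributes it to \cite{Be03} and gives no argument, so there is no in-paper proof to compare against, and your write-up genuinely supplies the missing content.

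The first display and the bound $|\nabla h_\epsilon|\le 2/\epsilon$ are routine and your treatment is fine. The substance is the Lipschitz bound on $\nabla h_\epsilon$, and your Hessian calculation is right. With $r=d(\cdot,A)$, $\pi_A$ the metric projection, and $v=(w-\pi_A w)/r$, the identity $\nabla^2 r=\tfrac{1}{r}(I-D\pi_A-vv^T)$ follows by the quotient rule from $\nabla r=v$; the facts that $D\pi_A$ is a.e.\ symmetric with spectrum in $[0,1]$ (hence the same for $I-D\pi_A$) hold because $\pi_A=\nabla\bigl(\tfrac12|w|^2-\tfrac12 d^2(w,A)\bigr)$ is the gradient of a convex function and is $1$-Lipschitz. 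Your region-by-region computation of $\nabla^2 h_\epsilon$ checks out: on $\{0<r<\epsilon/2\}$ it collapses to $-(4/\epsilon^2)(I-D\pi_A)$, of norm at most $4/\epsilon^2$; on $\{\epsilon/2<r<\epsilon\}$ the Rayleigh quotient $u^T\nabla^2 h_\epsilon u=\alpha\,u^T(I-D\pi_A)u+\tfrac{4}{\epsilon r}(u^Tv)^2$ with $\alpha\in[-4/\epsilon^2,0]$, $u^T(I-D\pi_A)u\in[0,1]$, $(u^Tv)^2\in[0,1]$ and $r>\epsilon/2$ lies in $[-4/\epsilon^2,8/\epsilon^2]$, so the symmetric Hessian has operator norm at most $8/\epsilon^2$. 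Your observation that the naive triangle inequality on $\nabla h_\epsilon=-(4/\epsilon)v+(4/\epsilon^2)(w-\pi_A w)$ only gives $12/\epsilon^2$ is correct and explains why the sign structure must be used. The integration step is legitimate: you have verified continuity of $\nabla h_\epsilon$ across $\{r=0\}$, $\{r=\epsilon/2\}$, $\{r=\epsilon\}$ (at $r=0^+$ it vanishes since $\psi'(0)=0$, and the two one-sided formulas agree at $\epsilon/2$ and $\epsilon$), so $\nabla h_\epsilon$ is locally Lipschitz, and for a locally Lipschitz map the global Lipschitz constant equals the essential supremum of the operator norm of its a.e.\ Jacobian; this gives $|\nabla h_\epsilon(w_1)-\nabla h_\epsilon(w_2)|\le (8/\epsilon^2)|w_1-w_2|$ as claimed.

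Two minor presentational suggestions: state explicitly that $I-D\pi_A=\nabla^2(d^2/2)$ is PSD because $d^2/2$ is convex and $\le I$ because $\tfrac12|w|^2-\tfrac12 d^2$ is convex (equivalently, $\pi_A$ is monotone and $1$-Lipschitz), rather than invoking firm nonexpansiveness without unpacking it; and note that the identity $\nabla h_\epsilon=-(4/\epsilon^2)(w-\pi_A w)$ on the inner shell already makes the continuity at $r=0$ transparent, since $w-\pi_A w\to 0$.
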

For a convex set $A$ and $\gamma\ge 0$, defining $g_{1,\epsilon}=h_\epsilon$ for $h=I_{A^{4\gamma}}$, we have
\bes{
\P(W\in A)-\P(Z\in A) &\le \P(W\in A^{4\gamma})-\P(Z\in A)\\
&\le \E g_{1,\epsilon} (W) -\E g_{1,\epsilon} (Z) +\E g_{1,\epsilon}(Z)-\P(Z\in A)\\
&\le \E g_{1,\epsilon} (W)- \E g_{1,\epsilon} (Z)+\P(Z\in A^{4\gamma+\epsilon}\backslash A)\\
&\le  \E g_{1,\epsilon} (W)- \E g_{1,\epsilon} (Z)+ k^{1/2} (4\gamma+\epsilon)
}
where we used \eq{h epsilon-1} and (\ref{p1-1}). If $A^{-\epsilon-4\gamma}=\emptyset$, by \eq{p1-1},
\be{
\P(W\in A)-\P(Z\in A) \ge -\P(Z\in A\backslash A^{-\epsilon-4\gamma})\ge -k^{1/2}(4\gamma+\epsilon).
}
If not, defining $g_{2,\epsilon}=h_\epsilon$ for $h=I_{(A^{-\epsilon-4\gamma})^{4\gamma}}$, we have
again by \eq{h epsilon-1} and \eq{p1-1},
\bes{
\P(W\in A)-\P(Z\in A) &\ge  \E g_{2,\epsilon} (W) -\E g_{2,\epsilon} (Z) +\E g_{2,\epsilon}(Z)-\P(Z\in A)\\
&\ge \E g_{2,\epsilon} (W) -\E g_{2,\epsilon} (Z)-\P(Z\in A\backslash A^{-\epsilon-4\gamma})\\
&\ge \E g_{2,\epsilon} (W) -\E g_{2,\epsilon} (Z)-k^{1/2}(4\gamma+\epsilon).
}
Therefore, we have the following smoothing lemma.
\begin{lem}\label{smooth lem}
For any $k$-dimensional random vector $W$,
\ben{\label{smooth ineq-2}
\sup_{A\in \mathcal{A}}|\P (W\in A)-\P (Z\in A)| \le \sup_{h=I_{A^{4\gamma}}: A\in\mathcal{A}}|\E h_\epsilon(W)-\E h_\epsilon(Z)|+k^{1/2}(\epsilon+4\gamma)
}
where $Z$ is a standard $k$-dimensional Gaussian random vector, $\mathcal{A}$ is the set of all the convex sets in $\mathbb{R}^k$, $\epsilon>0$, $\gamma\ge 0$ and $h_\epsilon$ is defined as in (\ref{h epsilon}).
\end{lem}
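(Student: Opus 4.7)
The plan is to control $|\P(W\in A)-\P(Z\in A)|$ on both sides by sandwiching $I_A$ between two smooth test functions of the prescribed form $h_\epsilon$ with $h=I_{B^{4\gamma}}$ for a convex set $B$, and then using the Gaussian concentration bound from Proposition \ref{p1} to absorb the sandwiching error. Since a supremum over all convex $A$ is being taken, I only need a uniform one-sided bound in each direction.

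For the upper bound, fix a convex $A\in\mathcal{A}$ and take $g_{1,\epsilon}:=h_\epsilon$ with $h=I_{A^{4\gamma}}$ (and $A^{4\gamma}$ is convex since it is the Minkowski sum of a convex set and a ball). By (\ref{h epsilon-1}), $g_{1,\epsilon}\equiv 1$ on $A^{4\gamma}\supseteq A$ and $g_{1,\epsilon}\equiv 0$ off $A^{4\gamma+\epsilon}$, so $I_A\le g_{1,\epsilon}\le I_{A^{4\gamma+\epsilon}}$. I will then write
\[
\P(W\in A)-\P(Z\in A)\le \bigl(\E g_{1,\epsilon}(W)-\E g_{1,\epsilon}(Z)\bigr)+\bigl(\E g_{1,\epsilon}(Z)-\P(Z\in A)\bigr),
\]
and bound the second bracket by $\P(Z\in A^{4\gamma+\epsilon}\setminus A)\le k^{1/2}(4\gamma+\epsilon)$ via Proposition \ref{p1}.

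For the lower bound I would sandwich from below. If $A^{-\epsilon-4\gamma}=\emptyset$, Proposition \ref{p1} alone gives $\P(W\in A)-\P(Z\in A)\ge -\P(Z\in A\setminus A^{-\epsilon-4\gamma})\ge -k^{1/2}(\epsilon+4\gamma)$. Otherwise, set $A':=A^{-\epsilon-4\gamma}$, which is convex (standard inner-parallel-body fact), and take $g_{2,\epsilon}:=h_\epsilon$ with $h=I_{(A')^{4\gamma}}$. Then $g_{2,\epsilon}\le I_{(A')^{4\gamma+\epsilon}}\le I_A$ by construction of $A'$, so
\[
\P(W\in A)-\P(Z\in A)\ge \bigl(\E g_{2,\epsilon}(W)-\E g_{2,\epsilon}(Z)\bigr)-\P(Z\in A\setminus A^{-\epsilon-4\gamma}),
\]
and the last term is again at most $k^{1/2}(\epsilon+4\gamma)$ by Proposition \ref{p1}. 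Taking absolute values and a supremum over $A\in\mathcal{A}$ yields (\ref{smooth ineq-2}), noting that both $g_{1,\epsilon}$ and $g_{2,\epsilon}$ are of the form $h_\epsilon$ with $h=I_{B^{4\gamma}}$ for some convex $B$ (namely $B=A$ and $B=A'$, respectively).

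The only real subtlety is the lower bound: one must produce a smooth minorant of $I_A$ that is itself of the admissible form $h_\epsilon$ with $h=I_{B^{4\gamma}}$. The right choice is to step back by $\epsilon+4\gamma$ inside $A$, which is exactly why the Gaussian concentration inequality is applied in both directions in Proposition \ref{p1}. The upper bound is routine; the lower bound requires verifying that $A^{-\epsilon-4\gamma}$ is convex and that $(A^{-\epsilon-4\gamma})^{4\gamma+\epsilon}\subseteq A$, after which the argument parallels the upper-bound case.
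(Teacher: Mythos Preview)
Your proposal is correct and follows essentially the same approach as the paper: the same choices $g_{1,\epsilon}=h_\epsilon$ with $h=I_{A^{4\gamma}}$ for the upper bound and $g_{2,\epsilon}=h_\epsilon$ with $h=I_{(A^{-\epsilon-4\gamma})^{4\gamma}}$ for the lower bound, the same case split on whether $A^{-\epsilon-4\gamma}$ is empty, and the same use of Proposition~\ref{p1} to control the Gaussian shell probabilities. You have merely spelled out a few routine facts (convexity of $A^{4\gamma}$ and $A^{-\epsilon-4\gamma}$, the inclusion $(A^{-\epsilon-4\gamma})^{4\gamma+\epsilon}\subseteq A$) that the paper leaves implicit.
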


The following lemma from \cite{Be03} will be used in this section.
\begin{lem}\label{lem6}
For a $k$-dimensional vector $x$,
\ben{\label{l3.3-1}
\int_{\mathbb{R}^k} |\sum_{j=1}^k x_j \partial_j \phi(z)| dz\le \sqrt{\frac{2}{\pi}}|x|,
}
\ben{\label{l3.3-2}
\int_{\mathbb{R}^k} |\sum_{j,j_1,j_2=1}^k x_j x_{j_1} x_{j_2} \partial_{jj_1j_2} \phi(z)|dz \le 2\frac{1+4 e^{-3/2}}{\sqrt{2\pi}}|x|^3.
}
\end{lem}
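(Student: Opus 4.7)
The plan is to exploit the rotational invariance of the standard Gaussian density $\phi$ to reduce each $k$-dimensional integral to a one-dimensional computation. Assume $x \neq 0$ (the inequalities are trivial otherwise) and set $u = x/|x|$. The integrands can be written as $|x|\,(u\cdot\nabla)\phi(z)$ and $|x|^3\,(u\cdot\nabla)^3\phi(z)$ respectively. Choosing any orthogonal matrix $Q$ with $Qu = e_1$ and substituting $w = Q^{-1}z$, the directional derivative becomes $\partial_1$ while $\phi$ and Lebesgue measure are preserved, and the integral factorizes as a one-dimensional integral in $w_1$ times Gaussian marginals in $w_2, \ldots, w_k$ each integrating to $1$.

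For the first bound, after the reduction we get $|x|\int_{\mathbb{R}} |\varphi'(t)|\,dt = |x|\int_{\mathbb{R}} |t|\varphi(t)\,dt = |x|\sqrt{2/\pi}$, where $\varphi$ is the one-dimensional standard normal density. For the second bound the same reduction gives $|x|^3 \int_{\mathbb{R}} |\varphi'''(t)|\,dt$. A direct computation yields $\varphi'''(t) = -(t^3 - 3t)\varphi(t)$, hence $|\varphi'''(t)| = |t|\,|t^2-3|\,\varphi(t)$, which changes sign at $t = 0, \pm\sqrt{3}$.

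I would then evaluate the integral either by splitting at $\pm\sqrt{3}$ and applying the elementary antiderivatives $\int t\varphi(t)\,dt = -\varphi(t)$ and $\int t^3\varphi(t)\,dt = -(t^2+2)\varphi(t)$, or, more cleanly, by identifying $\int_{\mathbb{R}}|\varphi'''(t)|\,dt$ with the total variation of $\varphi''$. Since $\varphi''(t) = (t^2-1)\varphi(t)$ has critical points at $0$ and $\pm\sqrt{3}$ with values $\varphi''(0) = -1/\sqrt{2\pi}$, $\varphi''(\pm\sqrt{3}) = 2e^{-3/2}/\sqrt{2\pi}$, and $\varphi''(\pm\infty) = 0$, summing the absolute values of the consecutive monotone differences yields $8\varphi(\sqrt{3}) + 2\varphi(0) = 2(1 + 4e^{-3/2})/\sqrt{2\pi}$, which is exactly the constant in \eqref{l3.3-2}.

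There is no real conceptual obstacle here: the rotation trick completely eliminates the dependence on $k$, and what remains is elementary one-dimensional calculus. The only place that requires care is the bookkeeping of sign changes of $\varphi'''$ to extract the precise constant $2(1+4e^{-3/2})/\sqrt{2\pi}$; the total-variation perspective is the most painless way to handle this since it reduces the computation to four point evaluations of $\varphi''$ at $0$ and $\pm\sqrt{3}$.
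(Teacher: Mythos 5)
Your proof is correct and follows the same dimension-reduction idea that the paper (and its cited source, Bentkus 2003) uses: the paper itself does not prove Lemma~\ref{lem6} but simply quotes it, and the proof it does give for the analogous Lemma~\ref{l3.4} proceeds by exactly this rotational-invariance trick, reducing to a Gaussian computation in a low-dimensional subspace. Your one-dimensional reduction and the total-variation evaluation of $\int|\varphi'''|$, using $\varphi''(0)=-1/\sqrt{2\pi}$ and $\varphi''(\pm\sqrt{3})=2e^{-3/2}/\sqrt{2\pi}$, recover the stated constant $2(1+4e^{-3/2})/\sqrt{2\pi}$ exactly, so the argument is complete.
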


Using the same argument as in \cite{Be03} when proving Lemma \ref{lem6}, we obtain the following lemma.
\begin{lem}\label{l3.4}
For $k$-dimensional vectors $u,v$, we have
\ben{\label{l3.4-1}
\int_{\mathbb{R}^k} |\sum_{j,j_1,j_2=1}^k u_j v_{j_1} v_{j_2} \partial_{jj_1j_2} \phi(z)| dz \le 2(1+\sqrt{\frac{2}{\pi}})|u||v|^2.
}
\end{lem}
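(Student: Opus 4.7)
My plan is to mimic the polynomial-expansion step of Bentkus's proof of Lemma~\ref{lem6}, and then combine the triangle inequality with Cauchy--Schwarz rather than reducing to a one-dimensional integral. First, iterating $\partial_j\phi(z)=-z_j\phi(z)$ twice yields
\[
\partial_{jj'j''}\phi(z) = \bigl(\delta_{jj'}z_{j''}+\delta_{jj''}z_{j'}+\delta_{j'j''}z_j - z_j z_{j'}z_{j''}\bigr)\phi(z),
\]
and contracting this against $u_jv_{j'}v_{j''}$, with the symmetry of $v_{j'}v_{j''}$ merging the first two Kronecker-delta terms, gives
\[
\sum_{j,j',j''}u_jv_{j'}v_{j''}\partial_{jj'j''}\phi(z) = \Bigl[2(u\cdot v)(v\cdot z) - (u\cdot z)\bigl((v\cdot z)^2-|v|^2\bigr)\Bigr]\phi(z).
\]

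I would then bound the $L^1$-norm in two pieces. With $Z$ a standard $k$-dimensional Gaussian vector, the triangle inequality gives
\[
\int_{\mathbb{R}^k}\Bigl|\sum_{j,j',j''}u_jv_{j'}v_{j''}\partial_{jj'j''}\phi(z)\Bigr|dz \le 2|u\cdot v|\,\E|v\cdot Z| + \E\bigl|(u\cdot Z)\bigl((v\cdot Z)^2-|v|^2\bigr)\bigr|.
\]
Using $\E|v\cdot Z|=|v|\sqrt{2/\pi}$ and $|u\cdot v|\le|u||v|$, the first piece is at most $2\sqrt{2/\pi}\,|u||v|^2$. For the second piece I would apply Cauchy--Schwarz:
\[
\E\bigl|(u\cdot Z)\bigl((v\cdot Z)^2-|v|^2\bigr)\bigr| \le \sqrt{\E(u\cdot Z)^2}\,\sqrt{\E\bigl((v\cdot Z)^2-|v|^2\bigr)^2} = \sqrt{2}\,|u|\,|v|^2,
\]
where I use $\E(u\cdot Z)^2=|u|^2$ together with $\E\bigl((v\cdot Z)^2-|v|^2\bigr)^2=|v|^4\,\Var(Y^2)=2|v|^4$ for $Y\sim N(0,1)$.

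Combining the two pieces, the integral is bounded by $(\sqrt{2}+2\sqrt{2/\pi})|u||v|^2$, and the elementary inequality $\sqrt{2}\le 2$ then yields the clean bound $2(1+\sqrt{2/\pi})|u||v|^2$ stated in the lemma. I do not anticipate any serious obstacle here: the only step requiring a small computation is the moment identity $\E\bigl((v\cdot Z)^2-|v|^2\bigr)^2=2|v|^4$, which reduces to $\E Y^4=3$ and $\E Y^2=1$ for a standard normal $Y$, and the one place where some constant is being thrown away is at the final step $\sqrt{2}\le 2$, which is negligible given that we are aiming only at the stated constant rather than the sharpest possible one.
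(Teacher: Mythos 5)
Your proof is correct, and it takes a genuinely different route from the paper's. Both start from the same algebraic identity
\[
\sum_{j,j',j''}u_jv_{j'}v_{j''}\partial_{jj'j''}\phi(z)
=\bigl(|v|^2(u\cdot z)+2(u\cdot v)(v\cdot z)-(u\cdot z)(v\cdot z)^2\bigr)\phi(z),
\]
but they then diverge. The paper follows Bentkus's strategy of reducing to the two-dimensional space spanned by $u,v$, splitting $u$ into its component along $v$ and the orthogonal remainder, and bounding by
$|u||v|^2\bigl(\E|3Z_1-Z_1^3|+\E|Z_2(1-Z_1^2)|\bigr)$ with $Z_1,Z_2$ independent standard normals. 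You instead group the three terms as $2(u\cdot v)(v\cdot z)-(u\cdot z)\bigl((v\cdot z)^2-|v|^2\bigr)$, apply the triangle inequality, treat the linear piece exactly via $\E|v\cdot Z|=|v|\sqrt{2/\pi}$, and use Cauchy--Schwarz together with $\E(Y^2-1)^2=2$ on the cubic piece. All the moment identities you invoke are correct, and the resulting constant $\sqrt{2}+2\sqrt{2/\pi}\approx 3.01$ is in fact slightly sharper than the stated $2(1+\sqrt{2/\pi})\approx 3.60$, so rounding $\sqrt{2}\le 2$ recovers the lemma as written. What your route buys is simplicity: no coordinate change, no dimension-free-ness argument, and no need to evaluate the awkward expectation $\E|3Z_1-Z_1^3|$; what it gives up is only the one-shot structure of Bentkus's two-dimensional reduction, which the paper reuses verbatim from its Lemma~\ref{lem6} and hence keeps mainly for parallelism.
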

\begin{proof}
It is straightforward to verify that
\besn{\label{l3.4-2}
&\sum_{j,j_1,j_2=1}^k u_j v_{j_1} v_{j_2} \partial_{jj_1j_2} \phi(z) \\
&= (|v|^2 (u\cdot z)+2 (u\cdot v)(v\cdot z)-(u\cdot z)(v\cdot z)^2) \phi(z).
}
From (\ref{l3.4-2}), we only need to consider the projection of $z$ in the two-dimensional space spanned by vectors $u,v$. Therefore, the constant obtained is dimension free and the upper bound \eq{l3.4-1} can be calculated as follows. Let $Z_1, Z_2$ be two independent $1$-dimensional standard Gaussian variables, then
\bes{
&\int_{\mathbb{R}^k} |\sum_{j,j_1,j_2=1}^k u_j v_{j_1} v_{j_2} \partial_{jj_1j_2} \phi(z)| dz\\
&\le |u||v|^2 (\E |3Z_1-Z_1^3|+ \E| Z_2 (1-Z_1^2)|)\le 2(1+\sqrt{\frac{2}{\pi}})|u||v|^2.
}
\end{proof}

\begin{thm}\label{t2}
Let $k$-dimensional random vector $W$ be
\[
W=(W_1,\ldots, W_k)^t=\sum_{i=1}^n X_i=\sum_{i=1}^n (X_{i1},X_{i2},\ldots,X_{ik})^t
\]
where $\{X_i: i\in [n]\}$ are independent such that $\E X_i=0$ for each $i$ and $\E W W^t=I_{k\times k}$. Then,
\ben{\label{t2-1}
\sup_{A\in \mathcal{A}}|\P (W\in A)-\P (Z\in A)|\le 115 k^{1/2} \gamma
}
where $\mathcal{A}$ is the set of all the convex sets in $\mathbb{R}^k$, $Z$ is a standard $k$-dimensional Gaussian vector and $\gamma=\sum_{i=1}^n \gamma_i=\sum_{i=1}^n \E |X_i|^3$.
\end{thm}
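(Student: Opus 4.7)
The plan is to combine the smoothing lemma (Lemma \ref{smooth lem}) with Stein's equation and the concentration inequality (Proposition \ref{p3}) to reduce the convex-set distance to a sum of third-order Taylor remainder terms, which are then controlled by Lemmas \ref{lem6}--\ref{l3.4} together with Proposition \ref{p3}.

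First, I apply Lemma \ref{smooth lem} with $\epsilon$ of order $\gamma$; the $4\gamma$ offset appearing there is precisely what is needed to match the offset inside the concentration inequality (\ref{p3-1}). This reduces the task to bounding $|\E h_\epsilon(W)-\E h_\epsilon(Z)|$ for $h = I_{A^{4\gamma}}$, at the cost of a smoothing error of order $k^{1/2}\gamma$. Taking $f$ in (\ref{f epsilon}) as the Stein solution for $h_\epsilon$, the Stein equation (\ref{Stein equation}) gives
\[
\E h_\epsilon(W) - \E h_\epsilon(Z) = \E[\Delta f(W) - W \cdot \nabla f(W)].
\]
Using independence and $\sum_i \E[X_i X_i^T] = I_{k\times k}$, I rewrite $\E[\Delta f(W)] = \sum_i \E[\operatorname{tr}(\nabla^2 f(W)\Sigma_i)]$ with $\Sigma_i = \E[X_i X_i^T]$, then use $\E[X_i\cdot\nabla f(W^{(i)})]=0$ by $X_i\perp W^{(i)}$, and Taylor-expand $\nabla f(W)$ around $W^{(i)}$ to second order and $\nabla^2 f(W)$ around $W^{(i)}$ to first order. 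The result is a sum of expectations of the form $\sum_{j,j',j''} y_j y_{j'} y_{j''}\,\partial_{jj'j''} f(W^{(i)} + tX_i)$, with $(y_j)$ being either coordinates of $X_i$ or contractions of $\Sigma_i$ with $X_i$; all such terms carry a factor of $\E|X_i|^3$, yielding a clean $\gamma$ after summing in $i$.

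The heart of the argument is bounding these third-order terms using the two-piece structure in (\ref{Df}). For the $s\ge\epsilon^2$ piece, after integrating by parts in $z$ to transfer the extra derivative onto $\phi$, Lemmas \ref{lem6} and \ref{l3.4} provide dimension-free bounds whose $s$-integrals are $O(1/\epsilon)$, contributing $O(\gamma/\epsilon)$. For the $s<\epsilon^2$ piece, the integrand contains a derivative of $h_\epsilon$, which vanishes outside the boundary layer $A^{4\gamma+\epsilon}\setminus A^{4\gamma}$; replacing the derivative by its $L^\infty$ bound $O(1/\epsilon)$ (or $O(1/\epsilon^2)$ for second derivatives of $h_\epsilon$) together with the indicator of the layer reduces the expectation to $\P(W^{(i)}$ close to $\partial A^{4\gamma})$, which by Proposition \ref{p3} is at most $O(k^{1/2}(\epsilon+\gamma))$. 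Choosing $\epsilon$ proportional to $\gamma$ then makes each contribution $O(k^{1/2}\gamma)$, giving (\ref{t2-1}) with an explicit constant.

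The main obstacle will be ensuring that the concentration inequality, which Proposition \ref{p3} supplies for $W^{(i)}$ (or $W$) directly, can be invoked to bound probabilities involving the mollified vector $\sqrt{1-s}\,W^{(i)}+\sqrt{s}\,z$ that naturally appears inside the integral of (\ref{Df}). Conditioning on $W^{(i)}$, enlarging the boundary layer by $\sqrt{s}\,|z|$ and integrating against $|\partial_j\phi(z)|$ (so that only a dimension-free constant is lost), and using the second bound (\ref{p3-2}) to absorb an extra $|X_i|$ factor when the expansion point is $W^{(i)}+tX_i$ rather than $W^{(i)}$, should all be enough. The constant $115$ then tracks the accumulated numerical losses from Lemma \ref{smooth lem}, Proposition \ref{p3}, and Lemmas \ref{lem6}--\ref{l3.4} across these third-order terms.
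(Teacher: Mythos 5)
Your overall skeleton (smoothing lemma with offset $4\gamma$, Stein solution $f_\epsilon$, split of $\partial_{jj'}f_\epsilon$ into the $s<\epsilon^2$ and $s\ge\epsilon^2$ pieces of (\ref{Df}), concentration inequality to control the short-time piece) matches the paper. The treatment of the $s<\epsilon^2$ piece and the idea of absorbing an extra $|X_i|$ factor via (\ref{p3-2}) are also on track.

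However, there is a fatal gap in your handling of the $s\ge\epsilon^2$ (long-time) piece. You claim that after integrating by parts in $z$ and invoking Lemmas \ref{lem6}--\ref{l3.4}, the $s$-integral contributes $O(\gamma/\epsilon)$ and that choosing $\epsilon\sim\gamma$ then yields $O(k^{1/2}\gamma)$. But $O(\gamma/\epsilon)$ with $\epsilon\sim\gamma$ is $O(1)$, not $O(k^{1/2}\gamma)$ -- the naive estimate $|h_\epsilon|\le 1$ together with $\int_{\epsilon^2}^1 s^{-3/2}\,ds\asymp 1/\epsilon$ simply does not close. Unlike the short-time piece, the integrand here contains $h_\epsilon$ itself rather than one of its derivatives, so there is no indicator of a thin boundary layer to exploit; the support of $h_\epsilon$ is not small. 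The paper circumvents this by a \emph{bootstrap/self-improving} argument that your plan does not contain: it defines $\delta_\gamma=\sup_W\sup_{A}|\P(W\in A)-\P(Z\in A)|$ over all eligible standardized sums with third-moment sum at most $\gamma$, subtracts and adds the term with $W^{(i)}$ replaced by an independent standard Gaussian $Z$, bounds the resulting difference by $\delta_\gamma$ plus concentration terms (see (\ref{applicationofsecondconcentrationinequality})), and bounds the remaining purely Gaussian term by a further integration by parts that exploits $\sqrt{1-s}\,Z+\sqrt{s}\,\widetilde Z\stackrel{d}{=}Z$ to eliminate the singular $s^{-3/2}$ factor (yielding (\ref{R212'})). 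The final estimate is then an inequality of the form $(1-c_0\gamma/\epsilon)\delta_\gamma\le C k^{1/2}\gamma + C' k^{1/2}\gamma^2/\epsilon + k^{1/2}(4\gamma+\epsilon)$, which is solved for $\delta_\gamma$ with $\epsilon=33\gamma$. Your proposal also omits the separate treatment of indices $i$ with $\gamma_i>8\gamma^3$, where $W^{(i)}$ must be re-standardized by $N_i$ so that the definition of $\delta_\gamma$ applies; without this, the bootstrap does not go through. Both of these ingredients are essential, and without them your plan cannot produce the $115 k^{1/2}\gamma$ bound.
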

\begin{proof} Without loss of generality, assume $\gamma$ is finite.
For a convex set $A\in \mathcal{A}$ and $\epsilon>0$, we define $h_{A,\epsilon}(w)=
\psi(d(w,A^{4\gamma})/\epsilon)$ as in \eq{h epsilon}. Let $g_{A,\epsilon}$ be defined as in \eq{g epsilon}
with $h$ replaced by $h_{A, \epsilon}$. As argued at the beginning of this section, $f_{A,\epsilon}(w)=\int_0^1 g_{A,\epsilon}(w,s)ds$ solves the Stein equation
\ben{\label{Stein equation 2}
\triangle f_{A,\epsilon}(w)- w\cdot \nabla f_{A,\epsilon}(w) = h_{A,\epsilon}(w) -\E h_{A,\epsilon}(Z).
}
In what follows, we keep the dependence on $A$ implicit and write $h_\epsilon=h_{A,\epsilon}$, $g_\epsilon=g_{A,\epsilon}$.

With $W^{(i)}=W-X_i$, we have by the independence assumption and $\E X_i=0$,
\bes{
& \E \triangle g_\epsilon (W,s)-\E  W\cdot \nabla g_\epsilon (W,s) \\
&=\E \triangle g_\epsilon(W,s)-\sum_{i=1}^n \E X_i \cdot (\nabla g_\epsilon(W,s)-\nabla g_\epsilon (W^{(i)},s)) \\
&=\E \triangle g_\epsilon (W,s)-\sum_{i=1}^n \E X_i\cdot (\text{Hess} g_\epsilon(W^{(i)},s)X_i) \\
&\quad -\sum_{i=1}^n \E X_i\cdot(\nabla g_\epsilon (W,s)-\nabla g_\epsilon (W^{(i)},s)-\text{Hess} g_\epsilon (W^{(i)},s)X_i) \\
&=R_1(s)-R_2(s)
}
where
\be{
R_1(s)=\sum_{i=1}^n \sum_{j,j_1=1}^k\E X_{ij}X_{ij_1}\E [\partial_{jj_1}g_\epsilon (W,s)-\partial_{jj_1}g_\epsilon(W^{(i)},s)] 
}
and
\be{
R_2(s)=\sum_{i=1}^n \sum_{j,j_1=1}^k \E X_{ij}X_{ij_1}[\partial_{jj_1}g_\epsilon (W^{(i)}+UX_i,s)-\partial_{jj_1}g_\epsilon(W^{(i)},s)] 
}
where $U$ is an independent uniform random variable in $[0,1]$.
By \eq{Stein equation 2}, 
\be{
\E h_\epsilon(W) -\E h_\epsilon(Z)=\int_0^1 (R_1(s)-R_2(s))ds.
}
For $R_2(s)$, we consider the cases $0<s\leq \epsilon^2$ and $\epsilon^2<s\leq 1$ separately. For the case $0<s\leq \epsilon^2$, we use the second expression of $\partial_{jj_1}g_\epsilon$ in \eq{Dg}, and write
\bes{
R_2(s)&=\sum_{i=1}^n  \sum_{j,j_1=1}^k \E X_{ij}X_{ij_1}\frac{1}{2\sqrt{s}}\int_{\mathbb{R}^k}\bigl[\partial_{j_1} h_\epsilon(\sqrt{1-s}W^{(i)}+\sqrt{1-s}UX_i+\sqrt{s}z)\nn \\
&\kern9em -\partial_{j_1} h_\epsilon(\sqrt{1-s}W^{(i)}+\sqrt{s}z)\bigr]\partial_{j}\phi(z)dz.
}
Introducing another independent uniform random variable $U'$ in $[0,1]$, we have
\bes{
&R_2(s)=\sum_{i=1}^n \sum_{j,j_1,j_2=1}^k \E UX_{ij}X_{ij_1}X_{ij_2}\frac{\sqrt{1-s}}{2\sqrt{s}} \\
&\kern4em \times\int_{\mathbb{R}^k} \partial_{j_1j_2} h_\epsilon(\sqrt{1-s}W^{(i)}+\sqrt{1-s}UU'X_i+\sqrt{s}z)\partial_{j}\phi(z)dz \\
&=\sum_{i=1}^n \sum_{j=1}^k \E UX_{ij} \frac{\sqrt{1-s}}{2\sqrt{s}} \\
&\quad \times \int_{\mathbb{R}^k} ( \sum_{j_1=1}^k X_{ij_1}\partial_{j_1} \nabla h_\epsilon (\sqrt{1-s}W^{(1)}+\sqrt{1-s} UU'X_i+\sqrt{s}z) \cdot X_i) \partial_j \phi(z) dz . 
}
Define any linear transform of a set to be the image of the linear transform of all the elements in the set. By (\ref{h epsilon-2}) and \eq{p3-1},
\bes{
&|\E ^{U,U',X_i}(\sum_{j_1=1}^k X_{ij_1} \partial_{j_1}\nabla h_\epsilon (\sqrt{1-s}W^{(i)}+\sqrt{s}z+\sqrt{1-s}UU'X_i)\cdot X_i)|\nn \\
&\le \frac{8}{\epsilon^2} |X_i|^2 \E ^{U,U'X_i}I(\sqrt{1-s}W^{(i)}\in A^{\epsilon+4\gamma}\backslash A^{4\gamma}-(\sqrt{s}z+\sqrt{1-s}UU'X_i))\nn \\
&\le  |X_i|^2 (32.8k^{1/2}\frac{1}{\epsilon\sqrt{1-s}}+312k^{1/2}\frac{\gamma}{\epsilon^2}).\nn
}
Therefore,
\besn{\label{R22}
|\int_0^{\epsilon^2} R_2(s)ds|&\le \frac{1}{2} \sum_{i=1}^n  \E  |X_i|^2 \int_0^{\epsilon^2} \frac{\sqrt{1-s}}{2\sqrt{s}}(32.8k^{1/2}\frac{1}{\epsilon\sqrt{1-s}}+312k^{1/2}\frac{\gamma}{\epsilon^2}) \\
&\times\int_{\mathbb{R}^k}|\sum_{j=1}^k X_{ij} \partial_j\phi(z)   |dzds \\
&\le \sqrt{\frac{2}{\pi}}\gamma(16.4k^{1/2}+156k^{1/2}\frac{\gamma}{\epsilon})
}
where we used \eq{l3.3-1}.
For the case $\epsilon^2<s\leq 1$, using the first expression of $\partial_{jj_1}$ in \eq{Dg} and the integration by parts formula,
\bes{
R_2(s)&=\sum_{i=1}^n \sum_{j,j_1=1}^k \E X_{ij}X_{ij_1}(-\frac{1}{2s})\int_{\mathbb{R}^k}\bigl[h_\epsilon(\sqrt{1-s}W^{(i)}+\sqrt{1-s}UX_i+\sqrt{s}z)\nn \\
&\kern9em -h_\epsilon(\sqrt{1-s}W^{(i)}+\sqrt{s}z)\bigr]\partial_{jj_1}\phi(z)dz\\
&=\sum_{i=1}^n \sum_{j,j_1,j_2=1}^k \E UX_{ij}X_{ij_1}X_{ij_2} \frac{\sqrt{1-s}}{2s^{3/2}}\\
&\kern9em\times \int_{\mathbb{R}^k} [h_\epsilon(\sqrt{1-s}W^{(i)}+\sqrt{s}z+\sqrt{1-s}UU'X_i)\phi(z)dz.
}
Write $R_2(s)=R_2'(s)+R_2''(s)$ by separating the sum over $i$ into two parts according to $\gamma_i \le 8\gamma^3$ or else. Write $R_2'(s)=R_{2,1}'(s)+R_{2,2}'(s)$ by subtracting a term with $W^{(i)}$ replaced by an independent $k$-dimensional standard Gaussian vector $Z$ and adding the same term, i.e.,
\bes{
R_{2,1}'(s)&=\sum_{i: \gamma_i\le 8\gamma^3} \sum_{j,j_1,j_2=1}^k \E UX_{ij}X_{ij_1}X_{ij_2} \frac{\sqrt{1-s}}{2s^{3/2}} \\
&\quad \times \int_{\mathbb{R}^k} [h_\epsilon(\sqrt{1-s}W^{(i)}+\sqrt{s}z+\sqrt{1-s}UU'X_i)\nn \\
&\kern4em \quad -h_\epsilon(\sqrt{1-s}Z+\sqrt{s}z+\sqrt{1-s}UU'X_i)]\partial_{jj_1j_2}\phi(z)dz
}
and
\bes{
R_{2,2}'(s)&=\sum_{i: \gamma_i\le 8\gamma^3} \sum_{j,j_1,j_2=1}^k \E UX_{ij}X_{ij_1}X_{ij_1'} \frac{\sqrt{1-s}}{2s^{3/2}}\nn \\
&\quad \times \int_{\mathbb{R}^k} 
h_\epsilon(\sqrt{1-s}Z+\sqrt{s}z+\sqrt{1-s}UU'X_i)\partial_{jj_1j_2}\phi(z)dz.\nn
}
By introducing an independent copy $\widetilde{X}_i$ of $X_i$, $\widetilde{W}=W^{(i)}+\widetilde{X}_i$ has the same distribution as $W$ and is independent of $X_i$. We have
\bes{
&\E ^{U,U',X_i} \bigl\{ h_\epsilon(\sqrt{1-s}W^{(i)}+\sqrt{s}z+\sqrt{1-s}UU'X_i)\\
&\kern4em -h_\epsilon(\sqrt{1-s}Z+\sqrt{s}z+\sqrt{1-s}UU'X_i) \bigr\} \\
&\le \E ^{U,U',X_i} \Bigl\{ I(W^{(i)} \in \frac{1}{\sqrt{1-s}}(A^{4\gamma+\epsilon}-\sqrt{s}z-\sqrt{1-s}UU'X_i))\\
&\kern5em -  I(Z \in \frac{1}{\sqrt{1-s}}(A^{4\gamma}-\sqrt{s}z-\sqrt{1-s}UU'X_i)) \Bigr\}\\
&\le \E ^{U,U',X_i} \biggl\{ I \bigl[W^{(i)}+\widetilde{X}_i\in \bigl( \frac{1}{\sqrt{1-s}}(A^{4\gamma+\epsilon}-\sqrt{s}z-\sqrt{1-s}UU'X_i) \bigr)^{|\widetilde{X}_i|}\\
&\kern12em  \backslash \frac{1}{\sqrt{1-s}}(A^{4\gamma+\epsilon}-\sqrt{s}z-\sqrt{1-s}UU'X_i)   \bigr]\\
&\kern6em + I(Z\in \frac{1}{\sqrt{1-s}}(A^{4\gamma+\epsilon}-\sqrt{s}z-\sqrt{1-s}UU'X_i) \\
&\kern13em  \backslash \frac{1}{\sqrt{1-s}}(A^{4\gamma}-\sqrt{s}z-\sqrt{1-s}UU'X_i))\\
&\kern6em +I(\widetilde{W} \in \frac{1}{\sqrt{1-s}}(A^{4\gamma+\epsilon}-\sqrt{s}z-\sqrt{1-s}UU'X_i)) \\
&\kern6em -I(Z\in \frac{1}{\sqrt{1-s}}(A^{4\gamma+\epsilon}-\sqrt{s}z-\sqrt{1-s}UU'X_i)) \biggr\}.
}
Let $\delta_\gamma$ denote the supreme of $\sup_{A\in \mathcal{A}}|\P (W\in A)-\P (Z\in A)|$ over all $W$ such that $W$ can be expressed as sum of $n$ independent mean $0$ random vectors such that $\cov(W, W)=I_{k\times k}$ and the sum of absolute third moments of the summands is bounded by $\gamma$. Using the concentration inequalities \eq{p3-2} and \eq{p1-1} and the definition of $\delta_\gamma$, we have
\besn{\label{applicationofsecondconcentrationinequality}
&\E ^{U,U',X_i}[h_\epsilon(\sqrt{1-s}W^{(i)}+\sqrt{s}z+\sqrt{1-s}UU'X_i) \\
&\quad-h_\epsilon(\sqrt{1-s}Z+\sqrt{s}z+\sqrt{1-s}UU'X_i)] \\
&\le  4.1 k^{1/2} \E |\widetilde{X}_i|+39 k^{1/2}\gamma +k^{1/2}\frac{\epsilon}{\sqrt{1-s}}+\delta_\gamma.
}
After proving a lower bound in same way as proving the upper bound \eq{applicationofsecondconcentrationinequality}, we have, by \eq{l3.3-2},
\besn{\label{R21'}
&|\int_{\epsilon^2}^1 R_{2,1}'(s)ds|\\
&\leq  \frac{1+4e^{-3/2}}{\sqrt{2\pi}} \sum_{i: \gamma_i\le 8\gamma^3} \frac{1}{\epsilon} 
\big[  4.1 k^{1/2} (\E |\widetilde{X}_i|^3)^{1/2}+39 k^{1/2}\gamma +k^{1/2}\epsilon+\delta_\gamma\big] \E|X_i|^3 \\ 
&\leq \frac{1+4e^{-3/2}}{\sqrt{2\pi}}(47.2k^{1/2}\frac{\gamma}{\epsilon}+k^{1/2}+\frac{\delta_\gamma}{\epsilon}) \sum_{i: \gamma_i\le 8\gamma^3} \E |X_i|^3.
}
For $R_{2,2}'(s)$, using the integration by parts formula and observing that $\sqrt{1-s}Z+\sqrt{s}\widetilde{Z}$ has the same distribution as $Z$ where $\widetilde{Z}$ is an independent copy of standard normal $Z$,
\bes{
&\E^{X_i} \int_{\epsilon^2}^1 \frac{\sqrt{1-s}}{2s^{3/2}}\int_{\mathbb{R}^k} h_\epsilon(\sqrt{1-s}Z+\sqrt{s}z+\sqrt{1-s}UU'X_i)\partial_{jj_1j_2}\phi(z)dzds\nn \\
&= -\E^{X_i} \int_{\epsilon^2}^1 \frac{\sqrt{1-s}}{2}\int_{\mathbb{R}^k}\partial_{jj_1j_2} h_\epsilon(\sqrt{1-s}Z+\sqrt{s}z+\sqrt{1-s}UU'X_i)\phi(z)dzds\nn \\
&=\int_{\epsilon^2}^1 \frac{\sqrt{1-s}}{2} \int_{\mathbb{R}^k} h_\epsilon(z+\sqrt{1-s}UU'X_i) \partial_{jj_1j_2}\phi(z)dzds.
}
Therefore, by \eq{l3.3-2}, $\E U=1/2$ and $\int_0^1 \sqrt{1-s}ds=2/3$,
\ben{\label{R212'}
|\int_{\epsilon^2}^1 R_{2,2}'(s)|\le \frac{1+4e^{-3/2}}{3\sqrt{2\pi}}\sum_{i:\gamma_i\le 8\gamma^3} \E |X_i|^3.
}
We remark that in the above calculation we used the third derivatives of $h_\epsilon$ which does not exist. However, we can smooth $h_\epsilon$ first then use limiting arguments to show that the final equality holds even if $h_\epsilon$ does not have third derivatives. Now we turn to bounding $\int_{\epsilon^2}^1 R_2''(s)ds$ where
\bes{
R_2''(s)&=\sum_{i: \gamma_i>8\gamma^3} \sum_{j,j_1,j_2=1}^k \E UX_{ij}X_{ij_1}X_{ij_2}\frac{\sqrt{1-s}}{2s^{3/2}}\nn \\
&\quad \times \int_{\mathbb{R}^k} h_\epsilon(\sqrt{1-s}W^{(i)}+\sqrt{1-s}UU'X_i+\sqrt{s}z)\partial_{jj_1j_2}\phi(z)dz.
}
For each $X_i$ such that $\gamma_i >8\gamma^3$, define $N_i$ to be the positive square root of the inverse of the matrix $I_{k\times k}-\cov (X_i, X_i)$. Then we have the following bound on the operator norm of $N_i$.
\besn{\label{Ni}
||N_i||&=\sqrt{||(I_{k\times k}-\cov (X_i, X_i))^{-1}||}\le (\frac{1}{1-||\cov (X_i, X_i)||})^{1/2} \\
&= (\frac{1}{1-\sup_{|u|=1} u'\cov(X_i, X_i) u})^{1/2} = (\frac{1}{1-\sup_{|u|=1}E(u'X_i)^2})^{1/2} \\
&\le (\frac{1}{1-E|X_i|^2})^{1/2} \le (\frac{1}{1-\gamma_i^{2/3}})^{1/2}.
}
Note that
\bes{
N_iW^{(i)}=\sum_{i_1: i_1\ne i} N_i X_{i_1}
}
is a sum of $n$ independent random vectors (with one $0$-vector) with
\bes{
\E N_i X_{i_1}=0, \quad \cov (N_iW^{(i)}, N_i W^{(i)})=I_{k\times k}
} 
and
\bes{
\sum_{i_1: i_1\ne i} \E |N_i X_{i_1}|^3&\le \frac{\gamma-\gamma_i}{(1-\gamma_i^{2/3})^{3/2}}
\le  \frac{\gamma-\gamma_i}{(1-\gamma_i^{2/3})^{2}}\le \frac{\gamma-\gamma_i}{1-2\gamma_i ^{2/3}}\le \gamma
}
where we used the fact that $\gamma_i >8\gamma^3$ in the last inequality. Therefore, $N_iW^{(i)}$ can be regarded as a standardized sum of $n$ independent random vectors  with sum of absolute third moments of the summands less than $\gamma$. We write $R_2''(s)$ into two parts as
\bes{
R_{2,1}''(s)&=\sum_{i: \gamma_i>8\gamma^3}\sum_{j,j_1,j_2=1}^k \E UX_{ij}X_{ij_1}X_{ij_2} \frac{\sqrt{1-s}}{2s^{3/2}} \\
&\qquad \times \int_{\mathbb{R}^k} [h_\epsilon(\sqrt{1-s}N_i^{-1} (N_iW^{(i)})+\sqrt{s}z+\sqrt{1-s}UU'X_i) \\
&\kern5em -h_\epsilon(\sqrt{1-s}N_i^{-1} Z+\sqrt{s}z+\sqrt{1-s}UU'X_i)]\partial_{jj_1j_2}\phi(z)dz
}
and
\bes{
R_{2,2}''(s)&=\sum_{i: \gamma_i>8\gamma^3}\sum_{j,j_1,j_2=1}^k \E UX_{ij}X_{ij_1}X_{ij_2} \frac{\sqrt{1-s}}{2s^{3/2}}\nn \\
&\quad \times \int_{\mathbb{R}^k} 
h_\epsilon(\sqrt{1-s}N_i^{-1}Z+\sqrt{s}z+\sqrt{1-s}UU'X_i)\partial_{jj_1j_2}\phi(z)dz.
}
By the definition of $\delta_\gamma$ above \eq{applicationofsecondconcentrationinequality} and \eq{p1-1},
\bes{
&\E^{U,U',X_i} [h_\epsilon(\sqrt{1-s}N_i^{-1} (N_iW^{(i)})+\sqrt{s}z+\sqrt{1-s}UU'X_i) \\
&\kern4em -h_\epsilon(\sqrt{1-s}N_i^{-1} Z+\sqrt{s}z+\sqrt{1-s}UU'X_i)]\\
&\le \E^{U,U',X_i} [I(N_iW^{(i)} \in \frac{N_i}{\sqrt{1-s}}
(A^{4\gamma+\epsilon}-\sqrt{s}z-\sqrt{1-s}UU'X_i))\\
&\kern6em -I(Z\in \frac{N_i}{\sqrt{1-s}}
(A^{4\gamma+\epsilon}-\sqrt{s}z-\sqrt{1-s}UU'X_i))\\
&\kern6em +I(Z\in \frac{N_i}{\sqrt{1-s}}
(A^{4\gamma+\epsilon}-\sqrt{s}z-\sqrt{1-s}UU'X_i))\\
&\kern6em -I(Z \in \frac{N_i}{\sqrt{1-s}}(A^{4\gamma}-\sqrt{s}z-\sqrt{1-s}UU'X_i))]\\
&\le \delta_\gamma+k^{1/2}\frac{\epsilon}{\sqrt{1-s}} ||N_i||
}
Along with a similar lower bound, we have by \eq{l3.3-2}
\ben{\label{R21''}
|\int_{\epsilon^2}^1R_{2,1}''(s)ds| \le \sum_{i:\gamma_i>8\gamma^3} \frac{1+4e^{-3/2}}{\sqrt{2\pi}} \E |X_i|^3 (\frac{\delta_\gamma}{\epsilon}+k^{1/2}\frac{1}{\sqrt{1-\gamma^{2/3}}}).
}
Using a similar argument leading to (\ref{R212'}), $R_{2,2}''(s)$ can be written as
\bes{
R_{2,1,2}'' &=\sum_{i:\gamma_i>8\gamma^3} \sum_{j,j_1,j_2=1}^k \E U X_{ij} X_{ij_1}X_{ij_2} \int_{\epsilon^2}^1 \frac{\sqrt{1-s}}{2}  \\
&\quad \times \int_{\mathbb{R}^k}h_\epsilon (Z+\sqrt{1-s}UU'X_i)\partial_{jj_1j_2}\phi_{\Sigma_i^s}(z) dz
}
where $\Sigma_i^s =I_{k\times k}-(1-s)\cov (X_i, X_i)$ and $\phi_{\Sigma_i^s}$ is the density function of $N(0,\Sigma_i^s)$.
\ignore{
$|N|$ denotes the determinant of matrix $N$.
$$\phi_{\Sigma}(z)=|\Sigma^{-1/2}|\phi(\Sigma^{-1/2}z)$$
$$|\sum_{j,j_1,j_2=1}^k u_j u_{j_1} u_{j_2} \partial_{jj_1j_2}\phi_{\Sigma}(z)|=|N||\sum_{i,i_1,i_2=1}^k (Nu)_i (Nu)_{i_1} (Nu)_{i_2} \phi_{ii_1i_2}(Nz)|$$
}
From
\bes{
& \int_{\mathbb{R}^k} |\sum_{j,j_1,j_2=1}^k  X_{ij} X_{ij_1}X_{ij_2} \partial_{jj_1j_2}\phi_{\Sigma_i^s}(z) |dz\\
&= \int_{\mathbb{R}^k} |\sum_{j,j_1,j_2=1}^k (N_i^s X_i)_j (N_i^s X_i)_{j_1} (N_i^s X_i)_{j_2} \partial_{jj_1j_2} \phi(z)  |dz
}
where $N_i^s$ is the positive square root of the inverse of $\Sigma_i^s$,
\ben{\label{R22''}
|\int_{\epsilon^2}^1 R_{2,2}''(s)ds|\le \sum_{i:\gamma_i>8\gamma^3} \E |X_i|^3 \frac{1+4e^{-3/2}}{3\sqrt{2\pi}} (\frac{1}{1-\gamma^{2/3}})^{3/2}
}
where we used the fact that $||N_i^s||\le (\frac{1}{1-\gamma^{2/3}})^{1/2}$, which can be proved as in (\ref{Ni}).
By \eq{R21'}, \eq{R212'}, \eq{R21''} and \eq{R22''},
\besn{\label{201}
|\int_{\epsilon^2}^1 R_2(s)ds|&\leq |\int_{\epsilon^2}^1 R_{2,1}'(s)ds|+|\int_{\epsilon^2}^1 R_{2,2}'(s)ds|+|\int_{\epsilon^2}^1 R_{2,1}''(s)ds|+|\int_{\epsilon^2}^1 R_{2,2}''(s)ds|\\
&\leq \frac{1+4e^{-3/2}}{\sqrt{2\pi}} \gamma(\frac{\delta_\gamma}{\epsilon}+k^{1/2}\frac{1}{\sqrt{1-\gamma^{2/3}}}+47.2 k^{1/2}\frac{\gamma}{\epsilon})\\
&\quad + \frac{1+4e^{-3/2}}{\sqrt{2\pi}} \gamma (\frac{1}{1-\gamma^{2/3}})^{3/2}.
}
Next we bound $\int_0^1 R_1(s)ds$.
Observing that $R_1(s)$ can be written as
\bes{
R_1(s)=\sum_{i=1}^n \sum_{j,j_1=1}^k \E \widetilde{X}_{ij} \widetilde{X}_{ij_1} [\partial_{jj_1}g_\epsilon (W,s)-\partial_{jj_1}g_\epsilon(W^{(i)},s)]
}
where $\widetilde{X}_i$ is an independent copy of $X_i$, we can bound it similarly as for $R_2(s)$ as follows.
\ben{\label{R12}
|\int_0^{\epsilon^2} R_1(s)ds|\le 2\sqrt{\frac{2}{\pi}} \gamma(16.4k^{1/2}+156k^{1/2}\frac{\gamma}{\epsilon}),
}
\besn{\label{R111}
|\int_{\epsilon^2}^1 R_1(s)ds|&\le 2(1+\sqrt{\frac{2}{\pi}})\gamma (\frac{\delta_\gamma}{\epsilon}+k^{1/2}\frac{1}{\sqrt{1-\gamma^{2/3}}}+47.2 k^{1/2}\frac{\gamma}{\epsilon})\\
&\quad +\frac{2}{3}(1+\sqrt{\frac{2}{\pi}})\gamma (\frac{1}{1-\gamma^{2/3}})^{3/2}.
}
Note that the constants are different from those of $R_2$ because we use (\ref{l3.4-1}) instead of (\ref{l3.3-2}) and an extra $2$ comes from the fact that there is no $U$ in $R_1$.
From the bounds (\ref{R111}), (\ref{R12}), \eq{201}, (\ref{R22}) and the smoothing inequality (\ref{smooth ineq-2}), with $c_0=2(1+\sqrt{\frac{2}{\pi}}) +\frac{1+4e^{-3/2}}{\sqrt{2\pi}}$,
\bes{
(1-\frac{\gamma c_0}{\epsilon}) \delta_\gamma&\le (49.2\sqrt{\frac{2}{\pi}}+\frac{c_0}{\sqrt{1-\gamma^{2/3}}}+\frac{c_0}{3(1-\gamma^{2/3})^{3/2}})k^{1/2}\gamma \\
&\quad +(468\sqrt{\frac{2}{\pi}}+47.2c_0)k^{1/2} \frac{\gamma^2}{\epsilon}+k^{1/2} (4\gamma+\epsilon).
}
Let $\epsilon=33\gamma$, and without loss of generality let $\gamma\le 1/115$. The bound (\ref{t2-1}) is proved by solving the above inequality.

\end{proof}

\section{Multivariate normal approximation under local dependence}

In this section, we prove multivariate normal approximation results for sums of locally dependent 
$k$-dimensional random vectors $W=\sum_{i=1}^n X_i$. 
In the first theorem, we assume (LD5), that is, in addition to (LD3), for each $i\in [n]$, there exist neighborhoods $D_i, E_i$ such that $\{X_{i_1}: i_1\in C_i\}$ is independent of $\{X_{i_1}: i_1\notin D_i\}$ and $\{X_{i_1}: i_1\in D_i\}$ is independent of $\{X_{i_1}: i_1\notin E_i\}$. Assuming further the $X_i$ have finite fourth moments, we get a bound which is typically of order $O_k(1/\sqrt{n})$. In the second theorem, we assume (LD3) and that the $X_i$ have finite third moments, and get a bound typically of order $O_k(\log n/\sqrt{n})$. The extra term $\log(n)$ also appeared in the result of \cite{RiRo96} where they assumed that the $X_i$ are bounded uniformly.
\begin{thm}\label{t4.2}
Let $k$-dimensional random vector $W=\sum_{i=1}^n X_i$ be a sum of locally dependent random vectors such that $\E X_i=0$ for each $i$ and $\E W W^t=I_{k\times k}$.
Assume (LD5) with neighborhood size bounded by
\bes{
&|A_i|,\  \max_{i\in [n]}|\{i_1: i\in A_{i_1}\}|\leq \theta_1,\\
&\max_{i\in [n]}|\{i_1: B_{i_1}\cap C_i\ne \emptyset\}|,\   \max_{i\in [n]}|\{i_1: B_{i}\cap C_{i_1}\ne \emptyset\}|  \leq \theta_2.
}
Let
\be{
Y_i=\sum_{i_1\in A_i} X_{i_1},\ Z_i=\sum_{i_1 \in B_i\backslash A_i} X_{i_1},\ T_i=\sum_{i_1 \in C_i\backslash B_i} X_{i_1}.
}
Let $\beta, \gamma$ be defined as in \eq{c3.3-0}, and let
\ben{\label{t4.2-4}
\alpha_2=2\inf_{\xi\in S^{k-1}} \inf_{i_1\in [n]}  \sum_{i: C_{i}\cap E_{i_1}=\emptyset} \E [\Var(Y_i\cdot \xi|X_{B_i\backslash A_i})]
}
where we recall $S^{k-1}$ denotes the unit $(k-1)$-dimensional sphere and $X_{B_i\backslash A_i}=\{X_{i_1}: i_1\in B_i\backslash A_i\}$.
Then we have
\ben{\label{t4.2-1}
\sup_{A\in \mathcal{A}}|\P (W\in A)-\P (Z\in A)|\le \frac{c}{1\wedge \alpha_2^{3/2}}   \theta_1^3 \sqrt{\theta_2} \sqrt{\frac{\beta}{\alpha_2}} (\gamma +b_1+b_2^{1/2})
}
where
\ben{\label{t4.2-2}
b_1=\sum_{i=1}^n\E \big( |X_i||Y_i|+\E|X_i||Y_I|  \big) (|Y_i|+|Z_i|),
}
\besn{\label{t4.2-3}
&b_2=\sum_{i=1}^n \E|X_i||Y_i|^2 (|Y_i|+|Y_i'|+|Z_i|+|Z_i'|)\\
&+ \sum_{i=1}^n \E (|X_i||Y_i|+\E|X_i||Y_i|)(|Y_i|+|Z_i|)(|Y_i|+|Y_i''|+|Z_i|+|Z_i''|+|T_i|+|T_i''|),
}
where $Y_i', Z_i'$ is an independent sample of $Y_i, Z_i$ given $\{X_{i_1}: i_1\notin B_i\}$ and $Y_i'', Z_i'', T_i''$ is an independent sample of $Y_i, Z_i, T_i$ given $\{X_{i_1}: i_1\notin C_i\}$,
and where $c$ is an absolute constant, $\mathcal{A}$ is the set of all the convex sets in $\mathbb{R}^k$ and $Z$ is a standard $k$-dimensional Gaussian vector.
\end{thm}

\begin{thm}\label{t4.1}
Let $k$-dimensional random vector $W=\sum_{i=1}^n X_i$ be a sum of locally dependent random vectors such that $\E X_i=0$ for each $i$ and $\E W W^t=I_{k\times k}$.
Assume (LD3) with neighborhood size bounded by
\besn{\label{t4.1-1}
&|B_i|,\  \max_{i\in [n]}|\{i_1: i\in B_{i_1}\}|\leq \theta_1',\\
&\max_{i\in [n]}|\{i_1: B_{i_1}\cap C_i\ne \emptyset\}|\   \max_{i\in [n]}|\{i_1: B_i\cap C_{i_1}\ne \emptyset\}|  \leq \theta_2.
}
Let $\alpha_1, \beta, \gamma$ be defined as in Corollary \ref{c3.3} and Remark \ref{r2.10}.
Then,
\ben{\label{t4.1-2}
\sup_{A\in \mathcal{A}}|\P (W\in A)-\P (Z\in A)|\le \frac{c}{1\wedge \alpha_1^{3/2}}   \theta_1'^3 \sqrt{\theta_2} \sqrt{\frac{\beta}{\alpha_1}} \gamma |\log \gamma|
}
where $c$ is an absolute constant, $\mathcal{A}$ is the set of all the convex sets in $\mathbb{R}^k$ and $Z$ is a standard $k$-dimensional Gaussian vector.
\end{thm}

\subsection{Applications}

A typical class of examples exhibiting local dependence structures is graph dependence. We first consider a special case of graph dependence. Let $G_n$ be a regular graph, with $n$ vertices and vertex degree $m$. Therefore, $G_n$ has $N=nm/2$ edges. Label the vertices by $\{1,\dots, n\}$. Let $\{\xi_{u}, u\in [n]\}$ be i.i.d. random variables taking values in $\mathcal{X}$. Let $f: \mathcal{X}\times \mathcal{X}\rightarrow \mathbb{R}^k$ be a function, and let
\be{
W=\sum_{u\sim v}f(\xi_u, \xi_v)=:\sum_{i\in E} X_i
}
where $u\sim v$ means that there is an edge in $G_n$ connecting $u$ and $v$ and $E$ denotes the edge set of $G_n$. Assume that for $u\ne v, u,v\in [n]$,  $\E f(\xi_u, \xi_v)=0$, $\E|f(\xi_u, \xi_v)|^4\leq \infty$, and $\cov(W, W)=I_{k\times k}$.
Because $X_i$ only depends on $\xi_u$ and $\xi_v$, $W$ can be regarded as a sum of locally dependent random vectors satisfying (LD5) with neighborhoods
\bes{
&A_i=\{ i_1\in E, \{i_1\}\cap \{i\}\ne \emptyset  \},\\
&B_i=\{ i_1\in E, \{i_1\}\cap A_i\ne \emptyset  \}, \\
&C_i=\{ i_1\in E, \{i_1\}\cap B_i\ne \emptyset  \}, \\
&D_i=\{ i_1\in E, \{i_1\}\cap C_i\ne \emptyset  \},\\
&E_i=\{ i_1\in E, \{i_1\}\cap D_i\ne \emptyset  \},
}
where the intersection of subsets of edge sets of $G_n$ is defined to be the set of vertices belong to both edge sets. 
From Theorem \ref{t4.2}, we conclude that
\ben{\label{e4.1-1}
\sup_{A\in \mathcal{A}}|\P (W\in A)-\P (Z\in A)|\le \frac{c}{1\wedge \alpha_2^{3/2}}   m^{11/2} \sqrt{\frac{\beta}{\alpha_2}} (\gamma +b_1+b_2^{1/2})
}
where $\alpha_2$ is defined in \eq{t4.2-4}, $\beta$ is defined in \eq{c3.3-0},
$b_1, b_2$ are defined in \eq{t4.2-2}, \eq{t4.2-3}, $c$ is an absolute constant, $\mathcal{A}$ is the set of all the convex sets in $\mathbb{R}^k$ and $Z$ is a standard $k$-dimensional Gaussian vector.

\ignore{
Next, we calculate the bound \eq{e4.1-1} in the graph coloring problem considered in \cite{GoRi96} and \cite{RiRo96}.
\begin{exa}
In the above framework, let $P(\xi_u=j)=\pi_j>0$ for $j\in [k]$ and $\sum_{j=1}^k \pi_j=1$. Let $\Sigma$ be a $k\times k$ matrix where for $j,j_1\in [k], j\ne j_1$,
\ben{\label{e4.1-5}
\Sigma_{jj}=N \pi_j^2 (1-\pi_j^2)+2N(m-1)(\pi_j^3-\pi_j^4),
\quad \Sigma_{jj_1}=-N(2m-1)\pi_j^2 \pi_{j_1}^2.
}
Define
\be{
W=\Sigma^{-1/2}\sum_{u\sim v} \big[ e_{\xi_u}I(\xi_u=\xi_v)-(\pi_1^2,\dots,\pi_k^2)^t  \big] 
}
where $e_j$ denotes the unit vector in the $j$th direction. From the definition, if we regard $\xi_u$ as the color of vertex $u$, then $W$ is a standardized version (\cite{GoRi96} proved $\cov(W,W)=I_{k\times k}$) of the numbers of edges connecting vertices with the same color. \cite{RiRo96} proved a bound on the non-smooth function distance between the distribution of $W$ and standard $k$-dimensional normal distribution as follows.
\ben{\label{e4.1-3}
\sup\{|\E h(W)-\E h(Z)|: h\in \mathcal{H}\}\leq c_k a m^{3/2} L^3(|\log L|+\log n)n^{-1/2}
}
where $\mathcal{H}$ is a class of functions satisfying certain conditions, $c_k$ is a constant depending on $k$, a is a constant depending on $\mathcal{H}$, and
\be{
L=[\min_{j\in [k]} \{\pi_j^2(1-\pi_j)\}]^{-1/2}.
}
In the case $\mathcal{H}$ is the class of indicator functions of convex sets in $\mathbb{R}^k$, $a$ can be chosen to be $k^{1/4}$. Applying \eq{e4.1-1}, we can prove
\ben{\label{e4.1-2}
\sup_{A\in \mathcal{A}}|\P (W\in A)-\P (Z\in A)|\le c k^{1/2} m^{11/2} L^4 /n^{-1/2}
}
where $c$ is an absolute constant.
Compared to the bound \eq{e4.1-3}, the bound \eq{e4.1-2} has explicit dependence on the dimension $k$ and does not have the $\log n$ term,
although the dependence on $m$ and $L$ in the bound \eq{e4.1-2} is larger than that in \eq{e4.1-3}.

\ignore{
Define $\Sigma'$ and $\Sigma''$ to be $k\times k$ matrices where for $j,j_1\in [k], j\ne j_1$,
\be{
\Sigma'_{jj}=N \pi_j^2 (1-\pi_j^2)+2N(m-1)(\pi_j^3-\pi_j^4),
\quad \Sigma'_{jj_1}=-N(2m-2)\pi_j^2 \pi_{j_1}^2
}
and
\be{
\Sigma''_{jj}=N(2m-1) \pi_j^2 (1-\pi_j^2)+2Nm(m-1)(\pi_j^3-\pi_j^4),
\quad \Sigma''_{jj_1}=-N(2m-1)^2\pi_j^2 \pi_{j_1}^2.
}
We can prove that
\be{
\alpha\geq \inf_{\xi\in S^{k-1}} \{\xi^t \Sigma^{-1/2} \Sigma' \Sigma^{-1/2} \xi\}
}
and
\be{
\beta\leq Tr(\Sigma^{-1/2} \Sigma'' \Sigma^{-1/2}).
}
The bound \eq{e4.1-2} is proven by observing $\alpha\geq c_1$ and $\beta\leq c_2 m$ (to be proven)

First prove a concentration inequality for the unstandardized version, then deduce from it the concentration inequality for $W$, see 9/30/13 1.
}
\end{exa}
\begin{proof}[Proof of \eq{e4.1-2}]
Define
\be{
S=\sum_{u\sim v} \big[ e_{\xi_u}I(\xi_u=\xi_v)-(\pi_1^2,\dots,\pi_k^2)^t  \big]=:\sum_{i\in E} X_i.
}
We first use Corollary \ref{c3.3} to obtain a concentration inequality for $S$.
Recall $Y_i=\sum_{i_1\in A_i} X_{i_1}, X_{B_i\backslash A_i}=\{X_{i_1}: i_1\in B_i\backslash A_i\}$ from \eq{c3.3-2}. We have
\bes{
Y_i&=\sum_{w\sim u\atop w\ne u,v} e_{\xi_w}I(\xi_w=\xi_u)
+\sum_{w\sim v \atop w\ne u,v} e_{\xi_w}I(\xi_w=\xi_v)\\
&\quad+e_{\xi_u}I(\xi_u=\xi_v)
-(2m-1)(\pi_1^2,\dots, \pi_k^2)^t.
}
From the fact that
\bes{
&\E(Y_i|\xi_{w}: w\sim u\  \text{or}\ w\sim v, w\ne u,v)\\
&=\sum_{w\sim u\atop w\ne u,v} e_{\xi_w}\pi_{\xi_w}
+\sum_{w\sim v \atop w\ne u,v} e_{\xi_w}\pi_{\xi_w}
-(2m-2)(\pi_1^2,\dots, \pi_k^2)^t,
}
we have for $\xi\in S^{k-1}$,
\be{
\E[\Var(Y_i\cdot \xi |\xi_{w}: w\sim u\  \text{or}\ w\sim v, w\ne u,v)] =\E[V^t\xi]^2
}
where
\bes{
V&=\sum_{w\sim u\atop w\ne u,v} e_{\xi_w}(I(\xi_w=\xi_u)-\pi_{\xi_w})
+\sum_{w\sim v \atop w\ne u,v} e_{\xi_w}(I(\xi_w=\xi_v)-\pi_{\xi_w})\\
&\quad+e_{\xi_u}I(\xi_u=\xi_v)
-(\pi_1^2,\dots, \pi_k^2)^t.
}
By straightforward calculations, we get
\be{
\E V V^t=\Sigma'
}
where
\bes{
&\Sigma'_{jj}= \pi_j^2 (1-\pi_j^2)+m(2m-2)(\pi_j^3-\pi_j^4)+2(m-1)(\pi_j^2-\pi_j^3),\\
&\Sigma'_{jj_1}=-\big[m(2m-2)+1\big] \pi_j^2 \pi_{j_1}^2.
}
As in \cite{GoRi96}, page 16, we decompose $\Sigma'$ as
\be{
\Sigma'=[m(2m-2)+1][A-bb^t]+H
}
where $A$ and $H$ are diagonal matrices with $j$th diagonal entry $\pi_j^3$, 
$(2m-1)(\pi_j^2-\pi_j^3)$ respectively, and $b$ is a column vector with 
$j$th component $\pi_j^2$. Because the matrix $A-bb^t$ is non-negative definite, the smallest eigenvalue of $\Sigma'$ is $\min_{j\in [k]}(2m-1)(\pi_j^2-\pi_j^3)=(2m-1)L^{-2}$. Therefore,
\be{
\E[\Var(Y_i\cdot \xi |\xi_{w}: w\sim u\  \text{or}\ w\sim v, w\ne u,v)]\geq (2m-1) L^{-2},
}
and
\bes{
\alpha&=2\inf_{\xi\in S^{k-1}} \sum_{i=1}^N \E [\Var(Y_i\cdot \xi|X_{B_i\backslash A_i})]\\
&\geq 2\inf_{\xi\in S^{k-1}} \sum_{i=1}^N \E [\Var(Y_i\cdot \xi|\xi_{w}: w\sim u\  \text{or}\ w\sim v, w\ne u,v)]\\
&\geq 2(2m-1)NL^{-2}.
}
Also,
\bes{
&\E [\Var(Y_{ij}|X_{B_i\backslash A_i})]\leq \Var(Y_{ij})\\
&= \E\Big[I(\xi_u=\xi_v=j)-\pi_j^2+\sum_{w\sim u\atop w\ne u,v}\big(I(\xi_w=\xi_u=j)-\pi_j^2\big)\\
&\quad\quad + \sum_{w\sim v\atop w\ne u,v}\big(I(\xi_w=\xi_v=j)-\pi_j^2\big)\Big]^2\\
&\leq cm^2L^{-2},
}
here and in the rest of this example, $c$ denote absolute constants.
Therefore,
\be{
\beta=2\sum_{j=1}^k \sum_{i=1}^N \E [\Var(Y_{ij}|X_{B_i\backslash A_i})]\leq ckm^2N L^{-2}.
}
Since $16\theta_1^3\gamma/\alpha\leq cm^2L^2$, we can choose $\delta=cm^2L^2$ and by \eq{c3.3-1},
\ben{\label{e4.1-4}
\P(S\in A^{\epsilon+\delta}\backslash A^{\delta})\leq ck^{1/2}N^{-1/2}(m^{9/2}L^3+m^{5/2}L\epsilon)
}
where $\epsilon>0$ and $A$ is any convex set in $\mathbb{R}^k$.

Next, we deduce a concentration inequality for $W=\Sigma^{-1/2}S$ from \eq{e4.1-4}. Recall the definition of $\Sigma$ in \eq{e4.1-5}. Denote the smallest and largest eigenvalues of $\Sigma$ by $\beta_1$ and $\beta_k$ respectively. Then as in \cite{GoRi96}, page 16, 
$\beta_1\geq NL^{-2}$, $\beta_k\leq 2Nm$. Let $\tilde{\delta}=cm^2L^3/N^{1/2}$ such that
$\beta_1^{1/2} \tilde{\delta}\geq \delta$. By \eq{e4.1-4} and
\be{
(\Sigma^{1/2}A)^{\beta_1^{1/2} \tilde{\delta}}\subseteq \Sigma^{1/2} (A^{\tilde{\delta}}),\ 
\Sigma^{1/2} (A^{\epsilon+\tilde{\delta}})\subseteq (\Sigma^{1/2}A)^{\beta_k^{1/2}(\epsilon+\tilde{\delta})},
}
for any $\epsilon>0$,
\besn{\label{e4.1-6}
&\P(W\in A^{\epsilon+\tilde{\delta}}\backslash A^{\tilde{\delta}})
=\P(S\in \Sigma^{1/2} (A^{\epsilon+\tilde{\delta}})\backslash \Sigma^{1/2} (A^{\tilde{\delta}}))\\
&\leq \P(S\in (\Sigma^{1/2}A)^{\beta_k^{1/2}(\epsilon+\tilde{\delta})} \backslash (\Sigma^{1/2}A)^{\beta_1^{1/2} \tilde{\delta}})\\
&\leq ck^{1/2}N^{-1/2}m^5L^4+ck^{1/2}m^3 L \epsilon.
}
In the following, we assume $m^7=o(n)$, otherwise \eq{e4.1-2} is trivial. Under the condition $m^7=o(n)$, we have (see \eq{r2.10-3})
\be{
\P^{X_{D_i}, \mathcal{F}}(W\in A^{\epsilon+\tilde{\delta}}\backslash A^{\tilde{\delta}})\leq ck^{1/2}N^{-1/2}m^5L^4+ck^{1/2}m^3 L \epsilon
}
where given $X_{D_i}$, $W$ is independent of $\mathcal{F}$, and $\epsilon, A$ my depend on $X_{D_i}$ and $\mathcal{F}$.
Following the proof of Theorem \ref{t4.2}, replacing 
$(\frac{\theta_1^3\sqrt{\theta-2}}{\alpha_2^{3/2}} \sqrt{\frac{\beta}{\alpha_2}}\gamma 
+ \frac{\sqrt{\theta_2 \beta}}{\alpha_2}\epsilon)$ 
by $(k^{1/2}N^{-1/2}m^5L^4+k^{1/2}m^3 L)$
in \eq{e4.1-7}, and using boundedness conditions of the $X_i$, we obtain
\bes{
\eta&\leq c\Big\{ k^{1/2}(\epsilon+\frac{m^2 L^3}{N^{1/2}}) +(\eta+k^{1/2}\epsilon) \frac{m^3 L^3}{N^{1/2}\epsilon} \\
&\quad +(k^{1/2}N^{-1/2}m^5L^4
+k^{1/2}m^3 L \epsilon)(\frac{m^3 L^3}{N^{1/2}\epsilon}+ \frac{m^6L^4}{N\epsilon^2}) \Big\}
}
where $\eta=\sup_{A\in \mathcal{A}}|\P(W\in A)-\P(Z\in A)|$.
Equation \eq{e4.1-2} is proven by choosing $\epsilon=2c m^3 L^3N^{-1/2}$ and solving for $\eta$.
\end{proof}
}

In principle, Theorem \ref{t4.2} and \ref{t4.1} can be applied to sums of random vectors with graph dependence structure to obtain results similar to \eq{e4.1-1}. Although a general result for graph dependence can be formulated, it is too tedious to write out. Instead, we give another example below which captures the details involved in the calculation of the bound. 
We consider the joint distribution of sums of partial products in an i.i.d. sequence.
In this example, we assume the existence of only finite third moments. Therefore, we shall apply Theorem \ref{t4.1}.
\begin{exa}
Let $\{\eta_1,\dots, \eta_n\}$ be i.i.d. random variables with $\E \eta_i=0, \Var (\eta_i)=1, \E|\eta_i|^3<\infty$. Define $\eta_{n+i}=\eta_i$ for all $i\in \mathbb{Z}$. For an integer $k\geq 1$, define
\be{
X_i=(X_{i1}, \dots, X_{ik})^t,\quad W=\sum_{i=1}^n X_i 
}
where for $1\leq j\leq k$,
\be{
X_{ij}=\eta_i\eta_{i+1}\dots \eta_{i+j-1}/\sqrt{n}.
}
Then $W$ is a sum of locally dependent random vectors with neighborhoods
\bes{
&A_i=\{i-k+1,\dots, i+k-1\},\\
&B_i=\{i-2k+2,\dots, i+2k-2\},\\
&C_i=\{i-3k+3,\dots, i+3k-3\}
}
for each $i\in [n]$ where $i+kn:=i$ for $k\in \mathbb{Z}$. Therefore, we can choose $\theta_1'=4k-3, \theta_2=10k-9$ in \eq{t4.1-1}. Let $\xi\in S^{k-1}$, and let $Y_i=\sum_{i_1=i-k+1}^{i+k-1}X_{i_1}$. We have
\bes{
&\E \Var (Y_i\cdot \xi | X_{B_i\backslash A_i})\\
&=\E \Var (\xi^t Y_i | \eta_{i-k+1},\dots,\eta_{i-1},\eta_{i+k},\dots,\eta_{i+2k-2})\\
&=\E \Var (\xi^t (Y_i-R_i) | \eta_{i-k+1},\dots,\eta_{i-1},\eta_{i+k},\dots,\eta_{i+2k-2})
}
where $R_i$ contains the terms in $Y_i$ which only depend on
$\{\eta_{i-k+1},\dots,\eta_{i-1}\}$.
After subtracting $R_i$, the conditional mean of $Y_i-R_i$ is $0$ and $\E (Y_i-R_i)(Y_i-R_i)^t$ is a diagonal matrix with diagonal components $(\frac{k}{n}, \dots, \frac{2k-1}{n})$. Therefore,
\bes{
&\E \Var (Y_i\cdot \xi | X_{B_i\backslash A_i})\\
&=\E (\xi^t (Y_i-R_i)(Y_i-R_i)^t \xi)\\
&=\frac{1}{n}[k\xi_1^2+(k+1)\xi_2^2+\dots+(2k-1)\xi_k^2].
}
By the above equation, $\alpha_1$ and $\beta$ in Corollary \ref{c3.3} and Remark \ref{r2.10} can be calculated as
\be{
\alpha_1=2(1-\frac{10k-9}{n})k, \quad \beta=3k^2-k.
}
Applying Theorem \ref{t4.1}, we have
\be{
\sup_{A\in \mathcal{A}}|\P (W\in A)-\P (Z\in A)|\le ck^4\frac{\log n}{\sqrt{n}} \E|\Omega|^3
}
where
\be{
\Omega=\sqrt{n}X_1=(\eta_1,\eta_1 \eta_2, \dots, \eta_1\cdots \eta_k)^t,
}
$c$ is an absolute constant, $\mathcal{A}$ is the set of all the convex sets in $\mathbb{R}^k$ and $Z$ is a standard $k$-dimensional Gaussian vector. An upper bound of $\E|\Omega|^3$ can be obtained as
\be{
\E|\Omega|^3\leq \E(|\eta_1|+|\eta_1 \eta_2|+\dots |\eta_1\dots \eta_k|)^3\leq k^3 (\E|\eta_1|^3)^k.
}
\end{exa}

\section{Proofs of Theorem \ref{t4.2} and \ref{t4.1}}
In this section, we give proofs of Theorem \ref{t4.2} and \ref{t4.1}. In the proofs, let $c$ be positive absolute constants which may differ in different expressions.

\begin{proof}[Proof of Theorem \ref{t4.2}]
Given $X_{D_i}=\{X_{i_1}: i_1\in D_i\}$, $W$ can be regarded as a sum of locally dependent random vectors. Using the same argument leading to \eq{c3.3-1} (see also Remark \ref{r2.10}), we have the following conditional concentration inequality.
\ben{\label{r2.10-3}
\P^{X_{D_i}, \mathcal{F}} (W\in A^{\epsilon+\delta_2}\backslash A^{\delta_2})\leq \frac{\theta_1^3 \sqrt{\theta_2}}{\alpha_2^{3/2}}(16\sqrt{2}+\frac{512}{3}\sqrt{\frac{\beta}{\alpha_2}})\gamma+\frac{16\sqrt{\theta_2 \beta}}{3\alpha_2}\epsilon
}
where given $X_{D_i}$, $W$ is independent of the $\sigma$-field $\mathcal{F}$,
$\epsilon, A$ may depend on $X_{D_i}$ and $\mathcal{F}$, and $\delta_2=16\theta_1^3 \gamma/\alpha_2$.

Proceed as in the proof of Theorem \ref{t2}. For a convex set $A\in \mathcal{A}$ and $\epsilon>0$, we define $h_{A,\epsilon}(w)=
\psi(d(w,A^{\delta_2})/\epsilon)$ as in \eq{h epsilon}. Let $g_{A,\epsilon}$ be defined as in \eq{g epsilon}
with $h$ replaced by $h_{A, \epsilon}$. Then $f_{A,\epsilon}(w)=\int_0^1 g_{A,\epsilon}(w,s)ds$ solves the Stein equation \eq{Stein equation 2}.
In what follows, we keep the dependence on $A$ implicit and write $h_\epsilon=h_{A,\epsilon}$, $g_\epsilon=g_{A,\epsilon}$.
We have the following smoothing inequality which is proved as for \eq{smooth ineq-2}:
\ben{\label{smooth ineq-4}
\sup_{A\in \mathcal{A}}|\P (W\in A)-\P (Z\in A)| \le \sup_{h=I_{A^{\delta_2}}: A\in\mathcal{A}}|\E h_\epsilon(W)-\E h_\epsilon(Z)|+k^{1/2}(\epsilon+\delta_2).
}
From the (LD3) assumption, with
\be{
Y_i:=\sum_{i_1\in A_i} X_{i_1},\quad V_i:=W-Y_i,
}
we have
\bes{
& \E \triangle g_\epsilon (W,s)-\E  W\cdot \nabla g_\epsilon (W,s) \\
&=\E \triangle g_\epsilon(W,s)-\sum_{i=1}^n \E X_i \cdot (\nabla g_\epsilon(W,s)-\nabla g_\epsilon (V_i,s)) \\
&=\Big[ \E \triangle g_\epsilon (W,s)-\sum_{i=1}^n \E X_i\cdot (\text{Hess} g_\epsilon(V_i,s)Y_i)\Big] \\
&\quad -\sum_{i=1}^n \E X_i\cdot(\nabla g_\epsilon (W,s)-\nabla g_\epsilon (V_i,s)-\text{Hess} g_\epsilon (V_i,s)Y_i) \\
&=:R_1(s)-R_2(s).
}
By \eq{Stein equation 2}, 
\be{
\E h_\epsilon(W) -\E h_\epsilon(Z)=\int_0^1 (R_1(s)-R_2(s))ds.
}
Let $U$ be an independent uniform random variable in $[0,1]$. We consider the cases $0<s\leq \epsilon^2$ and $\epsilon^2<s\leq 1$ separately. For the case $0<s\leq \epsilon^2$, we use the second expression of $\partial_{jj_1}g$ in \eq{Dg}, and write
\bes{
R_2(s)&=\sum_{i=1}^n  \sum_{j,j_1=1}^k \E X_{ij}Y_{ij_1}\frac{1}{2\sqrt{s}}\int_{\mathbb{R}^k}\bigl[\partial_{j_1} h_\epsilon(\sqrt{1-s}V_i+\sqrt{1-s}UY_i+\sqrt{s}z)\nn \\
&\kern9em -\partial_{j_1} h_\epsilon(\sqrt{1-s}V_i+\sqrt{s}z)\bigr]\partial_{j}\phi(z)dz.
}
Introducing another independent uniform random variable $U'$ in $[0,1]$ and using the integration by parts formula,
\bes{
R_2(s)
&=\sum_{i=1}^n \sum_{j=1}^k \E UX_{ij} \frac{\sqrt{1-s}}{2\sqrt{s}} \\
&\quad \times \int_{\mathbb{R}^k} ( \sum_{j_1=1}^k Y_{ij_1}\partial_{j_1} \nabla h_\epsilon (\sqrt{1-s}V_i+\sqrt{1-s} UU'Y_i+\sqrt{s}z) \cdot Y_i) \partial_j \phi(z) dz . 
}
By (\ref{h epsilon-2}) and \eq{r2.10-3},
\bes{
&|\E ^{U,U',X_{A_i}}(\sum_{j_1=1}^k Y_{ij_1} \partial_{j_1}\nabla h_\epsilon (\sqrt{1-s}V_i+\sqrt{s}z+\sqrt{1-s}UU'Y_i)\cdot Y_i)| \\
&\le \frac{8}{\epsilon^2} |Y_i|^2 \E ^{U,U', X_{A_i}}I(\sqrt{1-s}V_i \in A^{\epsilon+\delta}\backslash A^{\delta}-(\sqrt{s}z+\sqrt{1-s}UU'Y_i))\nn \\
&\le  \frac{c |Y_i|^2}{\epsilon^2 \sqrt{1-s}} \left(\frac{\theta_1^3 \sqrt{\theta_2}}{\alpha_2^{3/2}}(1+\sqrt{\frac{\beta}{\alpha_2}})\gamma+\frac{\sqrt{\theta_2 \beta}}{\alpha_2}\epsilon \right)
}
Therefore,
\bes{
|\int_0^{\epsilon^2}R_2(s)ds|&\le \frac{c}{\epsilon^2}\sum_{i=1}^n  \E  |Y_i|^2 \int_0^{\epsilon^2} \frac{1}{\sqrt{s}}\left(\frac{\theta_1^3 \sqrt{\theta_2}}{\alpha_2^{3/2}}(1+\sqrt{\frac{\beta}{\alpha_2}})\gamma+\frac{\sqrt{\theta_2 \beta}}{\alpha_2}\epsilon \right) \\
&\quad \times\int_{\mathbb{R}^k}|\sum_{j=1}^k X_{ij} \partial_j\phi(z)   |dzds \\
&\le \frac{c}{\epsilon} \sum_{i=1}^n \E |X_i||Y_i|^2 \left(\frac{\theta_1^3 \sqrt{\theta_2}}{\alpha_2^{3/2}}(1+\sqrt{\frac{\beta}{\alpha_2}})\gamma+\frac{\sqrt{\theta_2 \beta}}{\alpha_2}\epsilon \right)
}
where we used \eq{l3.3-1}.

For the case $\epsilon^2<s\leq 1$, let
$U, U'$ be independent uniform random variables in $[0,1]$. Let $U_i=\sum_{i_1\notin B_i}X_{i_1}$, and let $\{Y_i', Z_i'\}$ be independent samples of $\{Y_i, Z_i\}$ given $\{X_{i_1}: i_1\notin B_i\}$. Using the first expression of $\partial_{jj_1}g$ in \eq{Dg} and the integration by parts formula,
\bes{
&R_2(s)=\sum_{i=1}^n \sum_{j,j_1=1}^k \E X_{ij}Y_{ij_1}(-\frac{1}{2s})\int_{\mathbb{R}^k}\bigl[h_\epsilon(\sqrt{1-s}V_i+\sqrt{1-s}UY_i+\sqrt{s}z)\nn \\
&\kern9em -h_\epsilon(\sqrt{1-s}V_i+\sqrt{s}z)\bigr]\partial_{jj_1}\phi(z)dz\\
&=\sum_{i=1}^n \sum_{j,j_1,j_2=1}^k \E U X_{ij} Y_{ij_1} Y_{ij_2}  
\frac{\sqrt{1-s}}{2s^{3/2}} \\
&\kern7em \times \int_{\mathbb{R}^k} \big[ 
h_\epsilon (\sqrt{1-s}V_i+\sqrt{1-s}UU'Y_i+\sqrt{s}z) \\
&\kern10em-h_\epsilon(\sqrt{1-s}(U_i+Z_i'+Y_i')+\sqrt{s}z) \big] \partial_{jj_1j_2}\phi(z)dz\\
&\quad +\sum_{i=1}^n \sum_{j,j_1,j_2=1}^k \E U X_{ij} Y_{ij_1} Y_{ij_2}  
\frac{\sqrt{1-s}}{2s^{3/2}} \\
&\quad \times \int_{\mathbb{R}^k} \big[ 
h_\epsilon (\sqrt{1-s}(U_i+Z_i'+Y_i') +\sqrt{s}z) -h_\epsilon(\sqrt{1-s}Z +\sqrt{s}z) \big] \partial_{jj_1j_2}\phi(z)dz\\
&\quad +\sum_{i=1}^n \sum_{j,j_1,j_2=1}^k \E U X_{ij} Y_{ij_1} Y_{ij_2} 
\frac{\sqrt{1-s}}{2s^{3/2}} \int_{\mathbb{R}^k} 
h_\epsilon (\sqrt{1-s}Z+\sqrt{s}z) \partial_{jj_1j_2}\phi(z)dz\\
&=: R_{2,0}(s)+R_{2,1}(s)+R_{2,2}(s)
}
where $Z$ is an independent $k$-dimensional Gaussian vector.

Define $\eta:=\sup_{A\in\mathcal{A}} |\P(W\in A)-\P(Z\in A)|$.
By the local dependence assumption, $X_{A_i}$ is independent of $U_i+Z_i'+Y_i'$, which has the same distribution as $W$. Therefore,
\bes{
&\left|\E^{X_{A_i}} h_\epsilon\big(\sqrt{1-s}(U_i+Z_i'+Y_i')+\sqrt{s}z  \big)
-\E h(\sqrt{1-s}Z+\sqrt{s}z)   \right|\\
&\leq \P(\sqrt{1-s}Z+\sqrt{s}z\in A^{\epsilon+\delta_2}\backslash A^\delta_2)+ \sup_{A\in\mathcal{A}} |\P(W\in A)-\P(Z\in A)|\\
&\leq k^{1/2}\frac{1}{\sqrt{1-s}}\epsilon+2\eta,
}
and
\be{
|\int_{\epsilon^2}^1 R_{2,1}(s)ds|\leq c(\eta+k^{1/2}\epsilon) \frac{1}{\epsilon} \sum_{i=1}^n \E |X_i||Y_i|^2
}
by \eq{l3.4-1}.
Similarly as in proving \eq{R212'},
\be{
|\int_{\epsilon^2}^1 R_{2,2}(s)ds|\leq c\sum_{i=1}^n\E |X_i||Y_i|^2.
}
By \eq{h epsilon-2} and the fact that $Y_i', Z_i'$ are independent of $\{X_{i_1}:i_1\notin C_i\}$,
\bes{
&\left|\E^{U,U',X_{A_i}} \big[ h_\epsilon(\sqrt{1-s}(V_i+UU'Y_i)+\sqrt{s}z) 
-h_\epsilon(\sqrt{1-s}(U_i+Z_i'+Y_i')+\sqrt{s}z)  \big] \right|\\
&\leq \frac{c}{\epsilon}\E^{U,U',X_{A_i}} (|Y_i|+|Z_i|+|Y_i'|+|Z_i'|)\\
&\quad \times \E^{U,U',U'',X_{B_i}, Z_i', Y_i'} I(\sqrt{1-s}W+F\in A^{\epsilon+\delta_1}\backslash A^{\delta_2})
}
where $U''$ is an independent random variable in $[0,1]$ appeared when writing $h_\epsilon(a)-h_\epsilon(b)=(a-b)\E h_\epsilon'(U''a+(1-U'')b)$, and where $F$ is a random variable measurable with respect to $\sigma(X_{B_i}, Z_i', Y_i', U,U',U'')$.
The last inequality, along with \eq{r2.10-3} and \eq{l3.4-1}, yield
\be{
|\int_{\epsilon^2}^1 R_{2,0}(s)ds|\leq \frac{c}{\epsilon^2} \Big( \frac{\theta_1^3\sqrt{\theta_2}}{\alpha_2^{3/2}}\sqrt{\frac{\beta}{\alpha_2}}\gamma+\frac{\sqrt{\theta_2 \beta}}{\alpha_2}\epsilon \Big) 
\sum_{i=1}^n \E |X_i||Y_i|^2(|Y_i|+|Y_i'|+|Z_i|+|Z_i'|).
}

Let $\widetilde{X}_{ij}$ and $\widetilde{Y}_{ij}$ be independent copies of $X_{ij}$ and $Y_{ij}$ respectively. By the (LD2) assumption and $\E WW^t=I_{k\times k}$, we can write $R_1(s)$ as
\bes{
R_1(s)&=\sum_{i=1}^n \sum_{j,j_1=1}^k \E \widetilde{X}_{ij}\widetilde{Y}_{ij}
 \big[\partial_{jj_1} g_\epsilon (W,s) -\partial_{jj_1} g_\epsilon (U_i,s)  \big]\\
&\quad -\sum_{i=1}^n \sum_{j,j_1=1}^k \E X_{ij} Y_{ij}
 \big[\partial_{jj_1} g_\epsilon(V_i,s)-\partial_{jj_1}g_\epsilon(U_i,s)  \big].
}
Bounding $\int_0^1 R_1(s)ds$ can be done similarly as for $\int_0^1 R_2 (s) ds$. The only differences are that those $|X_i||Y_i|^2$ appearing in the upper bound for $\int_0^1 R_1(s)ds$ are changed to $(|X_i||Y_i|+\E|X_i||Y_i|)(|Y_i|+|Z_i|)$ and that those $|Y_i'|,|Z_i|'$ are changed to $|Y_i''|,|Z_i''|,|T_i''|$.

By the above bounds and the smoothing inequality \eq{smooth ineq-4}, we obtain
\ben{\label{e4.1-7}
\eta\leq c\Big\{k^{1/2}(\epsilon+\delta_2)+(\eta+k^{1/2}\epsilon)\frac{1}{\epsilon}b_1 +\Big(  \frac{\theta_1^3\sqrt{\theta_2}}{\alpha_2^{3/2}}\sqrt{\frac{\beta}{\alpha_2}}\gamma+\frac{\sqrt{\theta_2 \beta}}{\alpha_2}\epsilon  \Big) (\frac{b_1}{\epsilon}+\frac{b_2}{\epsilon^2}) \Big\}.
}
The bound \eq{t4.2-1} is obtained by choosing $\epsilon=2c b_1+\sqrt{b_2}$ and solving for $\eta$ in the above inequality.
\end{proof}

\begin{proof}[Proof of Theorem \ref{t4.1}]
Since the $\theta_1'$ in \eq{t4.1-1} can be made larger than $\theta_1$ in \eq{c3.3--1}, the conditional concentration inequality in \eq{r2.10-2} is valid with $\theta_1$ replaced by $\theta_1'$. 
Let $\delta_1=16\theta_1'^3\gamma/\alpha_1$. 
Proceed as in the proof of Theorem \ref{t2}. For a convex set $A\in \mathcal{A}$ and $\epsilon>0$, we define $h_{A,\epsilon}(w)=
\psi(d(w,A^{\delta_1})/\epsilon)$ as in \eq{h epsilon}. Let $g_{A,\epsilon}$ be defined as in \eq{g epsilon}
with $h$ replaced by $h_{A, \epsilon}$. Then $f_{A,\epsilon}(w)=\int_0^1 g_{A,\epsilon}(w,s)ds$ solves the Stein equation \eq{Stein equation 2}.
In what follows, we keep the dependence on $A$ implicit and write $h_\epsilon=h_{A,\epsilon}$, $g_\epsilon=g_{A,\epsilon}$.
We have the following smoothing inequality which is proved as for \eq{smooth ineq-2}:
\ben{\label{smooth ineq-3}
\sup_{A\in \mathcal{A}}|\P (W\in A)-\P (Z\in A)| \le \sup_{h=I_{A^{\delta_1}}: A\in\mathcal{A}}|\E h_\epsilon(W)-\E h_\epsilon(Z)|+k^{1/2}(\epsilon+\delta_1),
}
where $h_\epsilon$ is defined in (\ref{h epsilon}).
From the (LD3) assumption, with
\be{
Y_i:=\sum_{i_1\in A_i} X_{i_1},\quad V_i:=W-Y_i,
}
we have
\bes{
& \E \triangle g_\epsilon (W,s)-\E  W\cdot \nabla g_\epsilon (W,s) \\
&=\Big[ \E \triangle g_\epsilon (W,s)-\sum_{i=1}^n \E X_i\cdot (\text{Hess} g_\epsilon(V_i,s)Y_i)\Big] \\
&\quad -\sum_{i=1}^n \E X_i\cdot(\nabla g_\epsilon (W,s)-\nabla g_\epsilon (V_i,s)-\text{Hess} g_\epsilon (V_i,s)Y_i) \\
&=:R_1(s)-R_2(s).
}
By \eq{Stein equation 2}, 
\be{
\E h_\epsilon(W) -\E h_\epsilon(Z)=\int_0^1 (R_1(s)-R_2(s))ds.
}
By the proof of Theorem \ref{t4.2},
\bes{
|\int_0^{\epsilon^2} R_2(s) ds|
&\le \frac{c}{\epsilon}\sum_{i=1}^n \E|X_i||Y_i|^2 \left(\frac{\theta_1'^3 \sqrt{\theta_2}}{\alpha_1^{3/2}}(1+\sqrt{\frac{\beta}{\alpha_1}})\gamma+\frac{\sqrt{\theta_2 \beta}}{\alpha_1}\epsilon \right)\\
&\le \frac{c\theta_1'^3 \gamma}{\epsilon} \left(\frac{\theta_1'^3 \sqrt{\theta_2}}{\alpha_1^{3/2}}(1+\sqrt{\frac{\beta}{\alpha_1}})\gamma+\frac{\sqrt{\theta_2 \beta}}{\alpha_1}\epsilon \right).
}
where we used $\E|X_i||Y_i|^2\leq\E|X_i|^3+2\E|Y_i|^3/3$ and
\be{
\sum_{i=1}^n \E|Y_i|^3\leq \theta_1'^2\sum_{i=1}^n \sum_{i_1\in A_i}\E|X_{i_1}|^3\leq \theta_1'^3 \gamma.
}
Using the first expression of $\partial_{jj_1}g(w,s)$ in \eq{Dg},
\bes{
R_2(s)=&\sum_{i=1}^n \sum_{j,j_1=1}^k \E X_{ij}Y_{ij_1}(-\frac{1}{2s})\int_{\mathbb{R}^k}\bigl[h_\epsilon(\sqrt{1-s}V_i+\sqrt{1-s}UY_i+\sqrt{s}z)\nn \\
&\kern9em -h_\epsilon(\sqrt{1-s}V_i+\sqrt{s}z)\bigr]\partial_{jj_1}\phi(z)dz.
}
By \eq{h epsilon-2},
\bes{
&\left| \E^{U,X_{A_i}} \big[  h_\epsilon(\sqrt{1-s}V_i+\sqrt{1-s}UY_i+\sqrt{s}z)-h_\epsilon(\sqrt{1-s}V_i+\sqrt{s}z) \big]  \right|  \\
&\leq\frac{c}{\epsilon}|Y_i|\E^{U,X_{A_i}}\E^{U,U',X_{A_i}} I(\sqrt{1-s}(W-Y_i+UU'Y_i)+\sqrt{s}z
\in A^{\epsilon+\delta_1}\backslash A^{\delta_1})
}
where $U'$ is an independent random variable in $[0,1]$ appeared when writing $h_\epsilon(a)-h_\epsilon(b)=(a-b)\E h_\epsilon'(U'a+(1-U')b)$.
Therefore, by \eq{r2.10-2} and $\int_{\mathbb{R}^k}|\sum_{j,j_1=1}^k X_{ij} Y_{ij_1} \partial_{jj_1}\phi(z)|dz\le c|X_i||Y_i|$ by a similar argument as for \eq{l3.4-1}, we have
\be{
|\int_{\epsilon^2}^1 R_2(s)ds|\leq \frac{c\theta_1'^3 \gamma|\log \epsilon|}{\epsilon} \left(\frac{\theta_1'^3 \sqrt{\theta_2}}{\alpha_1^{3/2}}(1+\sqrt{\frac{\beta}{\alpha_1}})\gamma+\frac{\sqrt{\theta_2 \beta}}{\alpha_1}\epsilon \right).
}

Let $\widetilde{X}_{ij}$ and $\widetilde{Y}_{ij_1}$ be independent copies of $X_{ij}$ and $Y_{ij_1}$ respectively for each $i,j,j_1$, and let $U_i=W-\sum_{i_1\in B_i}X_{i_1}$. We have by the (LD2) assumption and $\E WW^t=I_{k\times k}$,
\bes{
R_1(s)=&\sum_{i=1}^n \sum_{j,j_1=1}^k \E \widetilde{X}_{ij} \widetilde{Y}_{ij_1} [\partial_{jj_1}g_\epsilon(W,s)-\partial_{jj_1} g_\epsilon(U_i,s)]   \\
&-\sum_{i=1}^n \sum_{j,j_1=1}^k \E X_{ij} Y_{ij_1} [\partial_{jj_1}g_\epsilon(V_i,s)-\partial_{jj_1} g_\epsilon(U_i,s)].
}
By the same argument as for $R_2(s)$,
\be{
|R_1|\leq \frac{c\theta_1'^3 \gamma|\log \epsilon|}{\epsilon} \left(\frac{\theta_1'^3 \sqrt{\theta_2}}{\alpha_1^{3/2}}(1+\sqrt{\frac{\beta}{\alpha_1}})\gamma+\frac{\sqrt{\theta_2 \beta}}{\alpha_1}\epsilon \right).
}
From the bounds on $R_1, R_2$ and the smoothing inequality \eq{smooth ineq-3},
\bes{
&\sup_{A\in \mathcal{A}}|\P (W\in A)-\P (Z\in A)|\\
&\leq c\left[k^{1/2}(\epsilon+\delta) +  \frac{\theta_1'^3 \gamma|\log \epsilon|}{\epsilon} \left(\frac{\theta_1'^3 \sqrt{\theta_2}}{\alpha_1^{3/2}}(1+\sqrt{\frac{\beta}{\alpha_1}})\gamma+\frac{\sqrt{\theta_2 \beta}}{\alpha_1}\epsilon \right)  \right].
}
The bound \eq{t4.1-2} is proven by choosing $\epsilon=\theta_1'^3 \gamma$.
\end{proof}

\section{Proofs of lemmas}

We prove Lemma \ref{lem1} to \ref{lem4} in this section.

\textbf{Proof of Lemma \ref{lem1}.}
The lemma is true by observing that for $x\in \mathbb{R}^k\backslash A^{\epsilon}$, $x_0$ must be the nearest point of $x_1$ in $\bar{A}$ where $x_0, x_1$ as defined above Lemma \ref{lem1}.

\textbf{Proof of Lemma \ref{lem3}.}
Because $x_0$, the nearist point in $\bar{A}$ from $x$, depends on $x$, the validity of (\ref{lem1-2}) is not obvious. We consider the following three cases. All the other cases can be reduced to these cases.

Case 1: $\eta \in \bar{A}$, $\eta +\xi\in \bar{A}$.

Case 2: $\eta \in A^{\epsilon}\backslash \bar{A}$, $\eta +\xi\in A^{\epsilon}\backslash \bar{A}$.

Case 3: $\eta \in \mathbb{R}^k\backslash A^{\epsilon}$, $\eta +\xi\in \mathbb{R}^k\backslash A^{\epsilon}$.

In case 1, since $f(\eta )=f(\eta +\xi)=0$, (\ref{lem1-2}) is satisfied.

From the facts that (\ref{lem1-2}) is equivalent to
\ben{
(-\xi) \cdot (f(\eta+\xi+(-\xi))-f(\eta+\xi)) \ge 0
}
and
\ben{\label{lem2-0}
\xi\cdot (\eta-\eta_0)>0\quad \text{implies}\quad (-\xi) \cdot ((\eta+\xi)-(\eta+\xi)_0  ) <0,
}
which can be proved using a similar argument as in the next paragraph, we only need to consider the following situation in case 2.

\ignore{
If $\xi\cdot (\eta-\eta_0)=0$. Let $p_1$ be the $(k-1)$-dimensional hyperplane orthogonal to $\eta-\eta_0$ and containing $\eta_0$. Let  $p_2$ be the $(k-1)$-dimensional hyperplane orthogonal to $\xi$ and containing $\eta+\xi$. The hyperplane $p_1$ divides $\mathbb{R}^k$ into two parts $s_1, s_2$, where $s_2$ is open and contains $\eta$; the hyperplane $p_2$ divides $\mathbb{R}^k$ into two parts $s_3, s_4$ where $s_3$ is closed and contains $\eta$. Then $(\eta+\xi)_0\in s_1\cap s_3$, which implies (\ref{lem1-2}).

If $\xi\cdot (\eta-\eta_0)>0$. Let $p_1$ be the plane parallel to $\eta-\eta_0$, $\xi$ and containing $\eta$. Let $(\eta+\xi)'$ be on $p_1$ such that $(\eta+\xi)'-(\eta+\xi)$ is parallel to $\eta-\eta_0$ and $(\eta+\xi)'-\eta_0$ is parallel to $\xi$. Let $p_2$ be the $(k-1)$-dimensional hyperplane orthogonal to $\xi$ and containing $(\eta+\xi)'$. Then $(\eta+\xi)_0$ must be on the same side of $p_2$ as $\eta_0$, which implies (\ref{lem1-2}).
}

Assume $\xi\cdot (\eta-\eta_0)\le 0$. Let $p_1$ be the plane containing points $\eta_0, \eta, \eta+\xi$. Let the point $(\eta+\xi)'$ be on $p_1$ such that $(\eta+\xi)'-(\eta+\xi)$ is parallel to $\eta_0-\eta$ and $(\eta+\xi)'-\eta_0$ is parallel to $\xi$. Let $p_2$ be the $(k-1)$-dimensional hyperplane orthogonal to $\xi$ and containing $(\eta+\xi)'$. The hyperplane $p_2$ divides $\mathbb{R}^k$ into two parts $s_1, s_2$ where $s_1$ is closed and contains $\eta$. If $(\eta+\xi)_0$, the nearest point in $\bar{A}$ from $\eta+\xi$, is in $s_1$, (\ref{lem1-2}) is satisfied. If not, let $(\eta+\xi)''$ be the projection of $(\eta+\xi)_0$ on $p_1$. Then the angle between $\eta_0-(\eta+\xi)''$ and $\eta+\xi-(\eta+\xi)''$ is less than $\pi/2$. This means that the angle between $\eta_0-(\eta+\xi)_0$ and $\eta+\xi-(\eta+\xi)_0$ is less than $\pi/2$, which contradicts with the fact that $(\eta+\xi)_0$ is the nearest point in $\bar{A}$ from $\eta+\xi$.

The validity of (\ref{lem1-2}) in case 3 can be proved similarly.

\ignore{
Next we consider Case 3. Only one situation is not trivial, that is, when $\xi\cdot (\eta-\eta_0)>0$. Let $p_1$ be the plane parallel to $\eta-\eta_0$, $\xi$ and containing $\eta$. Let $(\eta+\xi)'$ be on $p_1$ such that
\beq
(\eta+\xi)-(\eta+\xi)'=[\epsilon+(m_1+m_2)\delta] \frac{\eta-\eta_0}{|\eta-\eta_0|}.
\eeq
Let $(\eta+\xi)'''$ be in the same line as $(\eta+\xi), (\eta+\xi)'$ such that $(\eta+\xi)'''-\eta_0$ is orthogonal to $\eta-\eta_0$. Let $p_2$ be the $(k-1)$-dimensional hyperplane orthogonal to $\xi$ and containing $(\eta+\xi)'$. Define $(\eta+\xi)''$ to be the intersection of $p_2$ and the line $\{(\eta+\xi)_0+t [(\eta+\xi)-(\eta+\xi)_0]: 0 <t <1\}$. Then, if $|(\eta+\xi)-(\eta+\xi)''| \le \epsilon+ (m_1+m_2)\delta$, (\ref{lem1-2}) is satisfied. If $|(\eta+\xi)-(\eta+\xi)''| > \epsilon+ (m_1+m_2)\delta$, then the angle between $(\eta+\xi)_0-(\eta+\xi)$ and $\eta_0-(\eta+\xi)$ is bigger than the angle between $(\eta+\xi)'''-(\eta+\xi)$ and $\eta_0-(\eta+\xi)$. Also note that the angle between $(\eta+\xi)_0-\eta_0$ and $(\eta+\xi)-\eta_0$ is bigger than the angle between $(\eta+\xi)'''-\eta_0$ and $(\eta+\xi)-\eta_0$. Therefore, the angle between $\eta_0-(\eta+\xi)_0$ and $(\eta+\xi)-(\eta+\xi)_0$ is smaller than $\pi/2$, which contradicts with the fact that $(\eta+\xi)_0$ is the nearist point. Therefore, the lemma is proved.
}

\textbf{Proof of Lemma \ref{lem2}.}
We first prove $f_i$ is $1$-Lipschitz in direction $i$. From (\ref{lem2-0}), we only need to prove
\ben{\label{lem3-0}
|f_i (x+he_i) -f_i(x)| \le h, \quad h>0
}
in the following two cases.

Case 1: $x, x+he_i \in A^{\epsilon }\backslash \bar{A}$ and $e_i \cdot (x-x_0) \le 0$.

Case 2: $x, x+he_i \notin A^{\epsilon }$ and $e_i \cdot (x-x_0) \le 0$.

For case 1, let $p_1$ be the plane parallel to $x-x_0$, $e_i$ and containing $x$. Let $(x+he_i)'$ be on $p_1$ such that $(x+he_i)'-(x+he_i)$ is parallel to $x-x_0$ and $(x+he_i)'-x_0$ is parallel to $e_i$.
Let $p_2$ be the $(k-1)$-dimensional hyperplane orthogonal to $e_i$ and containing $(x+he_i)'$, and let $p_3$ be the $(k-1)$-dimensional hyperplane orthogonal to $x-x_0$ and containing $x_0$.
Let $(x+he_i)''$ be the projection of $x+he_i$ on $p_3$ and, let $x'$ be the intersection of the line $\{x_0+t(x-x_0): t\in\mathbb{R}\}$ with $p_2$. Then, $(x+h e_i)_0'$, the projection of $(x+he_i)_0$ on $p_1$, must be within the trapezoid $\{x_0, x', (x+he_i)', (x+he_i)''\}$ (including the boundary), which implies $h\ge f_i(x+h e_i) -f_i (x) \ge 0$. Therefore, (\ref{lem3-0}) is satisfied. Case 2 is similar.

Since $f_i$ is $1$-Lipschitz in direction $i$, $\partial_i f_i$ exist a.e.. From Lemma \ref{lem3},
\be{
\frac{f_i(x+he_i)-f_i(x)}{h}=\frac{(he_i)\cdot (f(x+he_i)-f(x))}{h^2} \ge 0, \forall
 \  h\in \mathbb{R}, h\ne 0.
}
Therefore,
\be{
\partial_i f_i (x)=\lim_{h\rightarrow 0} \frac{f_i(x+he_i)-f_i(x)}{h} \ge 0 \quad \text{a.e.}
}

\textbf{Proof of Lemma \ref{lem4}.} If $\theta_i=0$, $f_i(x)=x-x_0=x_i-x_{0i}$. Note that $x_0$ does not change by moving $x$ a little in the direction of $e_i$. So $\partial_i f_i (x)=1=\cos ^2 \theta_i$.

If $\theta_i=\pi/2$, Lemma \ref{lem4} follows from Lemma \ref{lem2}.

If $0< \theta_i< \pi/2$ and $h>0$ small enough such that $x+he_i\in (A^{\epsilon})^o \backslash \bar{A}$. Let $p_1$ be the $(k-1)$-dimensional hyperplane orthogonal to $x-x_0$ which contains $x_0$. Let $(x+he_i)'$ be the projection of $x+he_i$ on $p_1$. Let $p_2$ be the $(k-1)$-dimensional hyperplane orthogonal to $x_0-(x+he_i)'$ which contains $(x+he_i)'$. The hyperplane $p_1$ divides $\mathbb{R}^k$ into two parts $s_1, s_2$ where $s_2$ is open and contains $x$; the hyperplane $p_2$ divides $\mathbb{R}^k$ into two parts $s_3, s_4$ where $s_3$ is closed and contains $x$.
By observing
\be{
(x+he_i-(x+he_i)')\cdot e_i =f_i(x) +\cos ^2 \theta_i h
}
and $(x+he_i)_0$ must be in $s_1\cap s_3$, we have,
\be{
f_i(x+he_i)\ge (x+he_i-(x+he_i)')\cdot e_i=f_i(x)+\cos ^2 \theta_i h.
} 
This implies
\ben{\label{3.1}
\frac{f_i(x+he_i)-f_i(x)}{h} \ge \cos^2 \theta_i.
}
Therefore,
\be{
\lim_{h\rightarrow 0^{+}} \frac{f_i(x+he_i)-f_i(x)}{h} \ge \cos ^2 \theta_i \quad \text{a.e.}
}
So $\partial_i f_i(x)\ge \cos^2 \theta_i$ a.e. .
For the other possible choices of $\theta_i$, the arguments are similar. This completes the proof of Lemma \ref{lem4}.

\section*{Acknowledgement}

We thank a referee for encouraging us to extend our approach beyond independence.
Both authors were partially supported by Grant C-389-000-010-101 and Grant C-389-000-012-101 at the National University of Singapore. Part of the revision was done when XF was visiting Stanford University supported by NUS-Overseas Postdoctoral Fellowship from the National University of Singapore.

\end{document}